\numberwithin{equation}{section}
\newcommand{\hsp}[1]{{\hbox{\hspace{#1}}}}
\newcommand{\mystack}[2]{\ensuremath{ \substack{ \hbox{\scriptsize{${#1}$}} \\ \hbox{\scriptsize{${#2}$}} }} }
\def\a{\alpha}  
\def\b{\beta}  
\def\c{\gamma}  
\def\d{\delta}  
\def\e{\varepsilon}  
\def\z{\zeta}
\def\m{\mu}
\def\n{\nu}
\def\s{\sigma}
\def\x{\xi}
 \def\sfa{\mathsf{a}}
\def\tAd{\mathrm{Ad}}
\def\fb{\mathfrak{b}} 
\def\bC{\mathbb C}
\def\tcodim{\mathrm{codim}}
\def\tdiag{\mathrm{diag}} \def\tdim{\mathrm{dim}}
\def\tEnd{\mathrm{End}}
\def\texp{\mathrm{exp}}
\def\tGr{\mathrm{Gr}}
\def\fg{{\mathfrak{g}}} \def\tfg{\tilde\fg}
\def\fh{\mathfrak{h}}
\def\tti{\mathtt{i}} 
\def\bI{\mathbf{I}}
 \def\ttj{\mathtt{j}} 
\def\ttJ{\mathtt{J}}
 \def\ttk{\mathtt{k}}
\def\tLG{\mathrm{LG}}
\def\tmax{\mathrm{max}}
\def\fn{\mathfrak{n}}
\def\bP{\mathbb P}
\def\fp{\mathfrak{p}} 
\def\sfp{\mathsf{p}}
 \def\cQ{\mathcal Q} 
\def\fq{\mathfrak{q}} 
\def\sfq{\mathsf{q}}
\def\fr{\mathfrak{r}}
\def\sfr{\mathsf{r}}
 \def\cS{\mathcal S}
\def\fs{\mathfrak{s}}
 \def\sfs{\mathsf{s}}
\def\tSL{\mathrm{SL}} 
 \def\tSpin{\mathit{Spin}}
 \def\tSing{\mathrm{Sing}}
\def\tStab{\mathrm{Stab}}
\def\fsl{\mathfrak{sl}}
\def\sft{\mathsf{t}}
 \def\bZ{\mathbb Z}
\def\half{\tfrac{1}{2}}
\def\dfn{\stackrel{\hbox{\tiny{dfn}}}{=}}
\def\sbullet{{\hbox{\tiny{$\bullet$}}}}
\def\op{\oplus}
\def\ot{\otimes}
\def\wt{\widetilde}
\def\tw{\hbox{\small $\bigwedge$}}
\newcounter{numcnt}
\newcounter{cnt}
\newcounter{acnt}
\newenvironment{a_list}{ 
  \begin{list}{{(\alph{acnt})}}
   {\usecounter{acnt} \setlength{\itemsep}{3pt}
    \setlength{\leftmargin}{25pt} \setlength{\labelwidth}{20pt} }
   }
   {\end{list}}
\newenvironment{a_list_emph}{ 
  \begin{list}{{\emph{(\alph{acnt})}}}
   {\usecounter{acnt} \setlength{\itemsep}{3pt}
    \setlength{\leftmargin}{25pt} \setlength{\labelwidth}{20pt} }
   }
   {\end{list}}
\newcounter{Acnt}
\newcounter{icnt}
\newcounter{Icnt}
\newcounter{exam_cnt}
\newcounter{mccnt}
\newenvironment{circlist}{ 
  \begin{list}{$\circ$}
   {\usecounter{cnt} \setlength{\itemsep}{2pt}
    \setlength{\leftmargin}{15pt} \setlength{\labelwidth}{20pt} }
   }
   {\end{list}}
\newtheorem{corollary}[equation]{Corollary}
\newtheorem*{corollary*}{Corollary}
\newtheorem{lemma}[equation]{Lemma}
\newtheorem*{lemma*}{Lemma}
\newtheorem{proposition}[equation]{Proposition}
\newtheorem*{proposition*}{Proposition}
\newtheorem{theorem}[equation]{Theorem}
\newtheorem*{theorem*}{Theorem}
\theoremstyle{definition}
\newtheorem*{question*}{Question}          
\newtheorem*{boldQ*}{Question}
\theoremstyle{remark}
\newtheorem*{assume*}{Assume}
\newtheorem*{answer*}{Answer}
\newtheorem*{claim*}{Claim}
\newtheorem*{definition*}{Definition}
\newtheorem{example}[equation]{Example}
\newtheorem*{example*}{Example}
\newtheorem*{hint*}{Hint}
\newtheorem*{notation*}{Notation}
\newtheorem{remark}[equation]{Remark}
\newtheorem*{remark*}{Remark}
\newtheorem*{remarks*}{Remarks}
\newtheorem*{fact*}{Fact}
\newtheorem*{emphL*}{Lemma}
\newtheorem*{emphQ*}{Question}
\numberwithin{HWeq}{section}
\theoremstyle{definition}
\def\tfg{\tilde{\mathfrak{g}}}
\begin{document}
\title{Singular loci of cominuscule Schubert varieties}
\author[Robles]{C. Robles}
\email{robles@math.tamu.edu}
\address{Mathematics Department, Mail-stop 3368, Texas A\&M University, College Station, TX  77843-3368} 
\thanks{Robles is partially supported by NSF DMS-1006353.}
\date{\today}
\begin{abstract}
Let $X = G/P$ be a cominuscule rational homogeneous variety.  Equivalently, $X$ admits the structure of a compact Hermitian symmetric space.  I give a uniform description (that is, independent of type) of the irreducible components of the singular locus of a Schubert variety $Y\subset X$ in terms of representation theoretic data.  The result is based on a recent characterization of the Schubert varieties by an integer $\sfa\ge0$ and a marked Dynkin diagram.  Corollaries include: (1) the variety is smooth if and only if $\sfa=0$;  (2) if $G$ of Type ADE, then the singular locus occurs in codimension at least three.
\end{abstract}
\keywords{Rational homogeneous varieties, Schubert variety, cominuscule, compact Hermitian symmetric space, singular locus}
\subjclass[2010]
{
 14M15. 
}
\maketitle

\setcounter{tocdepth}{1}

\section{Introduction}

Let $X=G/P$ be a cominuscule rational homogeneous variety.  (Equivalently, $X$ admits the structure of a compact Hermitian symmetric space.  An example is the Grassmannian $\tGr(k,n)$ of $k$-planes in complex $n$-space.)   The main result (Theorem \ref{T:sing}) of this paper is a uniform (independent of $G$) description of the irreducible components of the singular loci of Schubert varieties $Y \subset X$.

\subsection*{Context and related results}

It is an important open question, to explicitly describe the singular loci of Schubert varieties in an arbitrary (not necessarily cominuscule) rational homogeneous variety $X = G/P$.  The problem lies at the interface between geometry, combinatorics and representation theory, and has stimulated research in each area.  There is a vast body of literature on the subject, and the interested reader may consult \cite{MR1782635} for an excellent overview.

In general, characterizations of smooth Schubert varieties, and descriptions of singular loci tend to be combinatorial in nature.  As an example, consider the full flag variety $X= \tSL_n\bC/B$, where $B$ is a Borel subgroup (e.g., upper triangular matrices).  In this case, the Schubert varieties are indexed by the symmetric group $S_n$ on $n$ letters.  Given $w \in S_n$, V. Lakshmibai and B. Sandhya \cite{MR1051089} showed that the corresponding Schubert variety $X_w$ is smooth if and only if the permutation avoids the patterns $(3412)$ and $(4231)$.  At that time, they gave a conjectural (and combinatorial) description of the irreducible components (which are necessarily Schubert varieties) of singular locus.  Gasharov \cite{MR1827861} established the sufficiency of the Lakshmibai--Sandhya conditions, and the full conjecture was established (independently) by S. Billey and G. Warrington \cite{MR1990570}, A. Cortez \cite{MR1994224}, C. Kassel, A. Lascoux and C. Reutenaur \cite{MR2015302}, and L. Manivel \cite{MR1853139}.  More recently, Billey and Postnikov \cite{MR2123545} have uniformly extended the Lakshmibai--Sandhya smoothness criterion to generalized flag manifolds $X = G/B$.  An advantage of the pattern avoidance criteria over other characterizations (see \cite{MR1782635}) of smoothness (and the weaker rational smoothness) is that it is nonrecursive and computationally efficient.

Returning to the case that $X = G/P$ is cominuscule (in general, $P$ is not a Borel subgroup), type--specific descriptions in the case that $X$ is classical ($G$ is of type ABCD) have been known for some time:  the type A case ($X$ is a Grassmannian) since the 1970s, and the rest by 1990; see \cite[Section 9.3]{MR1782635} and the references therein.  Those descriptions are given in terms of partitions.  In contrast, the  characterization of Theorem \ref{T:sing} is given by representation theoretic data, and so presents a complimentary perspective.  It also has the advantage of being independent of type, and yields an explicit description of the singular loci in the two non-classical cases ($G = E_6,E_7$).  Like the pattern advoidence criteria discussed above, the characterization is both nonrecursive and amenable to compuation.  

The cominuscule $X$ are closely related to the minuscule rational homogeneous varieties.  Indeed, with the exception of the quadric hypersurface $\cQ^{2n-1} = B_n/P_1$ and the Lagrangian Grassmannian $\tLG(n,2n) = C_n/P_n$, every irreducible cominuscule $X$ is minuscule.  Theorem \ref{T:sing} compliments descriptions of the singular loci of Schubert varieties in minuscule $X$ by M. Brion and P. Polo \cite{MR1703350}, and by N. Perrin \cite{MR2466424}.  (Brion and Polo also study singularities of cominuscule Schubert varieties, but -- so far as I can discern -- stop short of a complete description of the irreducible components.)

\subsection*{Contents}

In Section \ref{S:review} we review Schubert varieties, and their representation theoretic characterization by an integer $\sfa\ge0$ and a marking $\ttJ$ of the Dynkin diagram of $G$.  (The pair $(\sfa,\ttJ)$ encodes the relationship between $P$ and the stabilizer of the Schubert variety; see Remark \ref{R:Gw}.)  The main theorem and subsequent corollaries are discussed in Section \ref{S:sing}.  Corollaries \ref{C:|sing|} and \ref{C:sing} describe the relationship between the integer $\sfa = \sfa(Y)$ and the number of irreducible components in $\tSing(Y)$.  For example, $Y$ is smooth if and only if $\sfa=0$; if $\sfa=1$, then $\tSing(Y)$ is irreducible.  Corollary \ref{C:scd} gives lower bounds on the codimension of the singular locus, and characterizes those Schubert varieties for which the bound is realized.  The main result is proved in Section \ref{S:prf}.

\tableofcontents

\subsection{Use of LiE} \label{S:lie}

There are two exceptional, irreducible compact Hermitian symmetric spaces: the Cayley plane $E_6/P_6$ and the Freudenthal variety $E_7/P_7$.  The software \cite{LiE} is used to perform computations and verify some results for these two cominuscule varieties  (Section \ref{S:singE} and Appendix \ref{S:aJE}).  I emphasize that none of the proofs of the paper rely on LiE; the software is used only to compute the $(\sfa,\ttJ)$--values for these two exceptional varieties.  Moreover, the code I wrote applies to any cominuscule variety; that is, it is \emph{not} specialized to the exceptional cases.  This allowed me to test the code by applying it to classical cominuscule varieties (of low rank -- the code requires that the rank of $G$ be specified).   The outputs are consistent with the ``by hand" results obtained for the classical cases.  This provides some confidence for the accuracy of the code and the corresponding computations in the two exceptional cases.

\subsection*{Acknowledgements}
Over the course of this project, I have benefitted from conversations and/or correspondence with many people, including S. Billey, J. Carrell, V. Lakshmibai, F. Sottile, D. The, A. Woo and A. Yong.  I thank them for their insights and time.

\section{Review} \label{S:review}

\subsection{Notation and background} \label{S:not}

The present article is founded on a result of \cite{MR2960030}.  With the exception noted in Remark \ref{R:newaJ}, I will use the notation of that paper.  To streamline the presentation, I will give a laconic review of the discussion of rational homogeneous varieties, their Schubert subvarieties, grading elements and Hasse diagrams in \cite[Sections 2.1-2.4 and 3.1]{MR2960030}.  Briefly, $G$ is a complex simple Lie group.  A choice of Cartan and Borel subgroups $H \subset B$ has been fixed, $P \supset B$ is a maximal parabolic subgroup associated with a cominuscule root, and $X = G/P$ is the corresponding cominuscule variety.  The associated Lie algebras are denoted $\fh \subset \fb \subset \fp \subset \fg$.  Let $W$ denote the Weyl group of $\fg$, and $W_\fp$ the Weyl group of the reductive component in the Levi decomposition of $\fp$.  The Hasse diagram $W^\fp$ is the set of minimal length representatives of the right--coset space $W_\fp \backslash W$, and indexes the Schubert classes.  Let 
$$
  o = P/P \ \in \ X = G/P \, .
$$
Given $w \in W^\fp$, the Zariski closure
$$
  X_w \ := \ \overline{B w^{-1} \cdot o}
$$
is a Schubert variety.  Any $G$--translate of the Schubert variety $X_w$ will be referred to as a \emph{Schubert variety of type $w$}.  Let $\xi_w = [X_w] \in H_{2|w|}(X,\bZ)$ denote the corresponding Schubert class. 

Let $\{ Z_1,\ldots,Z_r\}$ be the basis of $\fh$ dual to the simple roots $\Sigma=\{\a_1,\ldots,\a_r\}$.  Let $\a_\tti$ be the simple root associated with the cominuscule $\fp$.  Since $Z_\tti$ is an element of the Cartan subalgebra $\fh$, the vector space $\fg$ decomposes into a direct sum of $Z_\tti$--eigenspaces.  The homogeneous variety $G/P$ is cominuscule if and only if the eigenvalues are $\{ -1,0,1\}$; that is, \begin{equation} \label{E:gi}
  \fg \ = \ \fg_1 \,\op\, \fg_0 \,\op\, \fg_{-1} \,,\quad\hbox{where}\quad
  \fg_k \, := \, \{ A \in \fg \ | \ [Z_\tti \,,\, A] = k A \} \,.
\end{equation}
The decomposition \eqref{E:gi} is the \emph{$Z_\tti$--graded decomposition} of the Lie algebra $\fg$.  Moreover,
\begin{equation} \label{E:p}
  \fp \ = \ \fg_1 \,\op\, \fg_0 \,,
\end{equation}
and $\fg_0$ is the reductive component of the parabolic subalgebra $\fp$.

\begin{remark} \label{R:abelian}
As a graded decomposition, we have $[\fg_k \,,\, \fg_\ell ] \subset \fg_{k+\ell}$.  In particular, the subspaces $\fg_{\pm1}$ are both $\fg_0$--modules, and abelian subalgebras of $\fg$.
\end{remark}

\noindent Equation \eqref{E:p} implies 
$$
  T_oX \ \simeq \ \fg_{-1}
$$
as an $\fg_0$--module.

\subsection*{Notation} 
Let $\Delta$ denote the set of roots of $\fg$.  Given $\a \in \Delta$, let $\fg_\a \subset \fg$ denote the corresponding root space.  Given any subset $\fs \subset \fg$, let 
$$
  \Delta(\fs) \ = \ \{ \a \in \Delta \ | \ \fg_\a \subset \fs \} \,.
$$
Given a subset $U$ of a vector space, let $\langle U \rangle$ denote the linear span.

\subsection{The characterization of Schubert varieties} \label{S:aJ}

This section is a concise review of the characterization of Schubert varieties $\xi_w$ by an integer $\sfa(w)\ge0$ and a marking $\ttJ(w)$ of the Dynkin diagram.  (The marking is equivalent to a choice of simple roots from $\Sigma \backslash\{\a_\tti\}$.)  For more detail see \cite{MR2960030}.  Given $w \in W^\fp$, define
\begin{equation} \label{E:nw}
  \Delta(w) \ = \ w\Delta^-\cap\Delta^+ \ \subset \ \Delta(\fg_1)
  \quad\hbox{and}\quad 
  \fn_w \ = \ \bigoplus_{\a\in\Delta(w)} \, \fg_{-\a} \ \subset \ \fg_{-1} \,.
\end{equation}
The set $\Delta(w)$ is known as the \emph{inversion set of $w$}.  Let $N_w = \texp(\fn_w)$.  
Then
\begin{equation}\label{E:Xw}
  Y_w \ := \ \overline{N_w \cdot o} \ = \ w X_w
\end{equation}
is a Schubert variety of type $w$. 

By work of Kostant \cite{MR0142696}, the $\ell$--th exterior power $\tw^\ell\fg_{-1}$ decomposes into irreducible $\fg_0$--modules $\bI_w$, which are indexed by elements of $W^\fp$ of length $\ell$
$$
  \tw^\ell \fg_{-1} \ = \ \bigoplus_{\mystack{w\in W^\fp}{|w|=\ell}} \bI_w \,.
$$
The highest weight line in $\bP \bI_w$ is $\fn_w$.   Let $1 \in W^\fp$ be the identity, and let $w_0 \in W^\fp$ be the longest element.  Then $Y_1 = o$ and $Y_{w_0} = X$, and $\bI_1$ and $\bI_{w_0}$ are trivial $\fg_0$--modules.  Assume $w \in W^\fp \backslash \{1,w_0\}$.  Let $\fq_w \subset \fg_0$ be the stabilizer of the highest weight line $\fn_w$.  Then there is a subset $\ttJ(w) \subset \{ 1 , \ldots , r \} \backslash\{\tti\}$ with the property that the Lie algebra $\fq_w$ is given by $\fq_w = \fg_{0,\ge0}$, where 
\begin{equation} \label{E:gkl}
  \fg_{k,\ell} \ := \ \{ A \in \fg_k \ | \ [Z_w \,,\, A] = \ell A \} 
  \quad \hbox{ and } \quad Z_w := \sum_{\ttj\in \ttJ(w)} Z_\ttj \, .
\end{equation}
We call $\fg = \oplus \fg_{k,\ell}$ the \emph{$(Z_\tti,Z_w)$--bigraded decomposition} of $\fg$.  It is a simple consequence of standard representation theory that $k\ell < 0$ forces $\fg_{k,\ell}=0$.  The following is \cite[Proposition 3.9]{MR2960030}.

\begin{proposition}[{\cite{MR2960030}}]\label{P:aJ}
Let $w \in W^\fp\backslash\{1,w_0\}$.  There exists an integer $\sfa = \sfa(w) \ge 0$ such that the inversion set is given by
\begin{subequations} 
\begin{equation} \label{E:Dw}
  \Delta(w) \ = \ \{ \a \in \Delta(\fg_1) \ | \ \a(Z_w) \le \sfa \}\,.
\end{equation}
Equivalently,
\begin{equation} \label{E:nw_aJ}
  \fn_w \ = \ \fg_{-1,0} \ \op \ \cdots \ \op \ \fg_{-1,-\sfa} \, .
\end{equation}
\end{subequations}
\end{proposition}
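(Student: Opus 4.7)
The plan rests on two structural inputs: $\fn_w$ is stable under $\fq_w=\fg_{0,\ge 0}$, and each $Z_w$-weight stratum of $\fg_{-1}$ is irreducible as a module for the Levi $\fg_{0,0}\subset\fq_w$. First, $\fn_w$ is $\fq_w$-stable by the very definition of $\fq_w$ as the stabilizer of the highest weight line spanned by $\bigwedge_{\alpha\in\Delta(w)}e_{-\alpha}$ in $\bI_w\subset\bigwedge^{|w|}\fg_{-1}$, together with the standard observation that stabilizing a decomposable line in an exterior power is equivalent to stabilizing the spanning subspace of $\fg_{-1}$. In particular $\fn_w$ is a sum of root spaces and satisfies $[\fg_{0,\ge 0},\fn_w]\subset\fn_w$.

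Second, I would argue that each stratum $\fg_{-1,\ell}$ is $\fg_{0,0}$-irreducible. The inputs here are the multiplicity-one property of the cominuscule $\fg_0$-module $\fg_{-1}$ together with the connectivity statement: any two $\alpha,\alpha'\in\Delta(\fg_1)$ with $\alpha(Z_w)=\alpha'(Z_w)$ are related by a chain in which consecutive terms differ by a root of $\fg_{0,0}$, i.e.\ a root $\gamma$ of $\fg_0$ with $\gamma(Z_w)=0$. Granted this, a $\fq_w$-stable subspace of $\fg_{-1}$ that is also $\fh$-stable must be a sum of entire strata, so $\fn_w=\bigoplus_{\ell\in S_w}\fg_{-1,\ell}$ for some $S_w\subset\{-m,\ldots,0\}$, where $m=\max_{\alpha\in\Delta(\fg_1)}\alpha(Z_w)$.

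Finally, I would pin down $S_w$ via two complementary observations. \emph{Upward closure:} if $\ell\in S_w$ and $k\ge 1$ with $[\fg_{0,k},\fg_{-1,\ell}]\ne 0$, then $\fn_w$ meets $\fg_{-1,\ell+k}$ nontrivially, hence by irreducibility $\fg_{-1,\ell+k}\subset\fn_w$, so $\ell+k\in S_w$. \emph{Presence of $0$:} since $w\ne 1$, the inversion set $\Delta(w)$ is nonempty; and $\Delta(w)$ is a lower order ideal of $\Delta(\fg_1)$ under the partial order $\alpha\le\beta\Leftrightarrow\beta-\alpha\in\sum_{j\ne\tti}\bZ_{\ge 0}\alpha_j$, whose unique minimum is $\alpha_\tti$. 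Thus $\alpha_\tti\in\Delta(w)$ and, since $\alpha_\tti(Z_w)=0$, also $0\in S_w$. Combined with upward closure, this forces $S_w=\{-\sfa,\ldots,0\}$ for some $\sfa\ge 0$, which is exactly \eqref{E:nw_aJ}, and the dual statement \eqref{E:Dw} is just the translation back to the level of roots.

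The principal obstacle is the $\fg_{0,0}$-irreducibility of the strata $\fg_{-1,\ell}$ -- this is the only place where cominuscularity is used essentially, and everything else is bookkeeping. I would try first to isolate a clean lemma: for a cominuscule $\fg_0$-module $V$ and any grading element $Z=\sum_{j\in\ttJ}Z_j$ with $\ttJ\subset\{1,\ldots,r\}\setminus\{\tti\}$, each $Z$-weight space of $V$ is irreducible under the centralizer of $Z$ in $\fg_0$. Failing a uniform proof, the short cominuscule classification lets one verify this case-by-case; the computation is most transparent in Type A (partition/Young-diagram description of $\Delta(\fg_1)$) and then analogous in the other types.
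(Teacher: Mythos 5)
There is a genuine gap, and it is exactly at the point you flag as the ``principal obstacle'': the claimed lemma that each $Z_w$-weight stratum $\fg_{-1,\ell}$ is irreducible as a $\fg_{0,0}$-module is \emph{false}, and the present paper depends essentially on its failure. The worked example in Section \ref{S:sing} ($X=\tGr(5,11)$, $\sfa=2$, $\ttJ=\{2,3,6,8,10\}$) states explicitly that ``the $\fg_{0,0}$--module $\fg_{-1,-1}$ consists of two irreducible submodules'' (a line spanned by $e^3_6$ and a four-dimensional module with highest weight vector $e^4_8$). More structurally, Theorem \ref{T:sing} and Corollary \ref{C:|sing|} identify the number of irreducible components of $\tSing(X_w)$ with the number of irreducible $\fg_{0,0}$-summands of the stratum $\fg_{1,\sfa-1}$, and by Corollary \ref{C:sing}(a) this number equals $\sfa(w)$ for Grassmannians; so every Schubert variety with $\sfa\ge 2$ furnishes a reducible stratum. (In the Type A picture you propose as the ``transparent'' check, a stratum is the anti-diagonal union of several rectangular blocks, one per pair $(\ell,m)$ with $\ell+m$ fixed, and these blocks are distinct irreducible $\fg_{0,0}$-summands.) The only stratum that is always irreducible is $\fg_{\pm 1,0}$, which is the fact invoked in the proof of Corollary \ref{C:a=1}. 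With irreducibility gone, both your ``sum of entire strata'' step and your ``upward closure'' step collapse: $\fq_w$-stability of $\fn_w$ only shows that $\Delta(w)$ is closed under subtracting roots of $\Delta(\fg_{0,\ge 0})$, which does not force $\fn_w$ to contain a whole stratum once it meets it.

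Two further remarks. First, note that you only use the containment $\fg_{0,\ge 0}\subset\fq_w$; the definition of $\ttJ(w)$ gives the \emph{equality} $\fq_w=\fg_{0,\ge 0}$, i.e.\ for each $\ttj\in\ttJ(w)$ the root space $\fg_{-\a_\ttj}$ does \emph{not} preserve $\fn_w$. This exactness (together with the order-ideal structure of $\Delta(w)$ inside the cominuscule poset $\Delta(\fg_1)$, whose covers are given by simple roots) is the missing ingredient needed to pin $\Delta(w)$ down to a union of full $Z_w$-level sets and then to a full interval of levels; your first and third paragraphs are sound as far as they go, but they cannot close without it. Second, be aware that this paper does not prove Proposition \ref{P:aJ} at all --- it is imported verbatim as \cite[Proposition 3.9]{MR2960030} --- so the comparison here is with the argument in that reference, which proceeds through the combinatorics of the inversion set rather than through any irreducibility of the strata.
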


\begin{remark}  
\begin{a_list}
\item
By \cite[Proposition 3.19]{MR2960030}, the Schubert variety $Y_w$ is smooth if and only if $\sfa(w)=0$.  In particular, the proposition generalizes the characterization of the smooth Schubert varieties of $X$ by connected Dynkin sub-diagrams containing the $\tti$--th node.  For more on the relationship between the integer $\sfa(w)$ and $\tSing(Y_w)$, see Corollary \ref{C:sing}.  
\item 
By \eqref{E:nw_aJ}, the pair $\sfa(w),\ttJ(w)$ characterizes $\fn_w$ as a direct sum of $Z_w$--eigenspaces.  This is the computational tool that we will use to work with the inversion set $\Delta(w)$.
\item
In the case that $X$ is a classical (types ABCD), well-known descriptions of Schubert varieties are given by partitions.  Appendix \ref{S:app1} provides a `translation' between the $(\sfa,\ttJ)$ and partition descriptions.
\item
The $(\sfa,\ttJ)$--values for the Schubert classes in the two non-classical cominuscule varieties are given by Figures \ref{f:E6} and \ref{f:E7}.   
\item
Since $\xi_w = [Y_w]$, and $Y_w$ is determined by $\Delta(w)$, the pair $\sfa(w), \ttJ(w)$ characterizes $\xi_w$, when $w \in W^\fp\backslash\{1,w_0\}$.  
\item
A tableau-esque analog of Proposition \ref{P:aJ} is given by H. Thomas and A. Yong in \cite[Proposition 2.1]{MR2538022}.
\end{a_list}
\end{remark}

\begin{remark}\label{R:newaJ}
I follow the notation of \cite{MR2960030}, with the following exception.  In \cite{MR2960030}, we uniformly write $\ttJ = \{\ttj_1<\cdots<\ttj_\sfp\}$.  Here, it is convenient to reorder the $\ttj_\ell$ in some cases.
\end{remark}

\section{Singular locus} \label{S:sing}

For this section we fix, once and for all, $w \in W^\fp\backslash\{1,w_0\}$.  The singular locus $\tSing(X_w)$ is a union of Schubert subvarieties $X_{w'} \subset X_w$.  Let $\tSing_w \subset W^\fp$ be the subset indexing the irreducible components of $\tSing(X_w)$, so that 
$$
  \tSing(X_w) \ = \ \bigcup_{w' \in \tSing_w}  X_{w'} \,.
$$

\begin{definition*}
Let $\Pi_{1,\sfa-1} := \{ \e \in \Delta(\fg_{1,\sfa-1}) \ | \ \e + \a \not\in\Delta \ \forall \ \a\in\Delta^+(\fg_{0,0})\}$.  Equivalently, $\Pi_{1,\sfa-1}$ is the set of highest weights associated with the $\fg_{0,0}$--module $\fg_{1,\sfa-1}$.
\end{definition*}

Let $\e \in \Pi_{1,\sfa-1}$.  Define 
\begin{equation} \label{E:we}
\renewcommand{\arraystretch}{1.2} \begin{array}{rcl}
  \Delta(w,\e) & := & \{\e\} \ \sqcup \ 
  \{ \n \in \Delta(\fg_{1,\sfa}) \ | \ \n-\e \in \Delta(\fg_{0,1}) \} \\
  & = & 
  \{\e\} \ \sqcup \ \left\{ \Delta \cap 
  \left(\e+\Delta(\fg_{0,1})\right) \right\} \, .
\end{array}\end{equation}

\begin{lemma} \label{L:we}
There exists $w_\e \in W^\fp$ such that $\Delta(w_\e) = \Delta(w) \backslash \Delta(w,\e)$.
\end{lemma}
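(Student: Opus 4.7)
The plan is to verify that the subset
\[ S \ := \ \Delta(w) \setminus \Delta(w,\e) \ \subset \ \Delta(\fg_1) \]
is downward-closed with respect to $\Delta^+(\fg_0)$: that is, whenever $\n \in S$, $\a \in \Delta^+(\fg_0)$, and $\n - \a \in \Delta(\fg_1)$, we have $\n - \a \in S$. In the cominuscule setting, where $\fg_1$ is abelian (Remark \ref{R:abelian}), such order ideals in $\Delta(\fg_1)$ are in bijection with $W^\fp$ via $w' \mapsto \Delta(w')$, so the existence of the desired $w_\e$ will follow. Since $\Delta(w)$ is itself an order ideal by Proposition \ref{P:aJ}, the task reduces, contrapositively, to showing: if $\m := \n - \a \in \Delta(w,\e)$ with $\n \in \Delta(w)$ and $\a \in \Delta^+(\fg_0)$, then $\n \in \Delta(w,\e)$.

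I would split along the two cases of \eqref{E:we}. Suppose first that $\m = \e$, so $\n = \e + \a$. Writing $\a \in \Delta^+(\fg_{0,k})$ for some $k \ge 0$ gives $\n \in \Delta(\fg_{1,\sfa-1+k})$, and the constraint $\n(Z_w) \le \sfa$ (from $\n \in \Delta(w)$) forces $k \in \{0,1\}$. The possibility $k=0$ is ruled out by the highest-weight property of $\e \in \Pi_{1,\sfa-1}$, which says $\e + \a' \notin \Delta$ for all $\a' \in \Delta^+(\fg_{0,0})$. Hence $k=1$ and $\a \in \Delta(\fg_{0,1})$, so $\n - \e = \a \in \Delta(\fg_{0,1})$ places $\n$ in $\Delta(w,\e)$.

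Next, suppose $\m \ne \e$ and set $\b := \m - \e \in \Delta(\fg_{0,1})$, so $\n = \e + \b + \a$. The same bi-grading constraint now forces $\a(Z_w) = 0$, i.e.\ $\a \in \Delta^+(\fg_{0,0})$. The argument reduces to showing $\b + \a \in \Delta$: granting this, $\b + \a$ has bi-weight $(0,1)$, hence lies in $\Delta(\fg_{0,1})$, and $\n - \e = \b + \a$ certifies $\n \in \Delta(w,\e)$. Fix nonzero root vectors $X_\e, X_\b, X_\a$. Since $\e + \a \notin \Delta$ by the highest-weight property of $\e$, we have $[X_\a, X_\e] = 0$, so the Jacobi identity collapses to
\[ [[X_\e, X_\b], X_\a] \ = \ -[[X_\b, X_\a], X_\e]. \]
The left-hand side is nonzero: $[X_\e, X_\b]$ spans $\fg_\m$ (since $\e + \b = \m \in \Delta$), and its bracket with $X_\a$ spans $\fg_\n$ (since $\m + \a = \n \in \Delta$). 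Hence the right-hand side is nonzero, $[X_\b, X_\a] \ne 0$, and $\b + \a \in \Delta$ as required.

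The main obstacle I anticipate is the Jacobi-identity step in the second case, whose success hinges on using the highest-weight property of $\e$ to annihilate $[X_\a, X_\e]$. Everything else is routine bookkeeping with the bi-grading \eqref{E:gkl} and the characterizations of $\Delta(w)$ and $\Delta(w,\e)$ supplied by Proposition \ref{P:aJ} and the definition \eqref{E:we}.
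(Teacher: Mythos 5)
Your proof is correct and follows essentially the same route as the paper: both reduce the lemma to a closure condition on root sets (your ``downward-closed order ideal'' formulation of $\Delta(w)\setminus\Delta(w,\e)$ is equivalent, via Remark \ref{R:Dw}, to the paper's check that $\Delta^+\setminus\{\Delta(w)\setminus\Delta(w,\e)\}$ is closed), split into the two cases of \eqref{E:we}, and settle the second case by the same Jacobi-identity computation exploiting that $\e$ is a highest $\fg_{0,0}$--weight. The only difference is the contrapositive packaging; the substance is identical.
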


\begin{theorem}  \label{T:sing}
The roots $\Pi_{1,\sfa-1}$ are in bijective correspondence with the irreducible components of the singular locus $\tSing(X_w)$.  Explicitly, 
$$
 \tSing_w \ = \ \{ w_\e \ | \ \e \in \Pi_{1,\sfa-1} \} \, .
$$
\end{theorem}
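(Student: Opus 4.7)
The plan is to locate the singular locus of $Y_w$ by computing Zariski tangent dimensions at its torus--fixed points and matching the dimension overcount against the representation--theoretic data $(\sfa, \ttJ)$. Throughout, I use the identification $Y_w = \overline{N_w \cdot o}$ from \eqref{E:Xw}, together with Proposition \ref{P:aJ}, which expresses $\fn_w$ as a sum of $(Z_\tti,Z_w)$--bigraded pieces.

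First, I would establish Lemma \ref{L:we} by verifying that $\Delta(w)\setminus\Delta(w,\e)$ is the inversion set of some $w_\e \in W^\fp$. In the cominuscule setting the inversion sets $\Delta(v) \subset \Delta(\fg_1)$ are precisely the order ideals under the root partial order, so it suffices to show $\Delta(w,\e)$ is a filter in $\Delta(w)$. The hypothesis that $\e \in \Pi_{1,\sfa-1}$ forces $\e + \a \notin \Delta$ for every $\a \in \Delta^+(\fg_{0,0})$; combined with the observation that $\Delta(\fg_{1,\sfa})$ is the top layer of $\Delta(w)$ by \eqref{E:nw_aJ}, and that addition of a root from $\fg_{0,\ell}$ with $\ell \ge 2$ pushes any weight of $\fg_{1,\sfa-1}$ out of $\fg_1$ by Remark \ref{R:abelian}, the required closure property drops out from a short bigrading chase.

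Second, for the containment $X_{w_\e} \subset \tSing(X_w)$, I would work in the $T$--stable affine chart centred at the torus--fixed point corresponding to $w_\e$, obtained by $w_\e$--translation of the opposite cell $\texp(\fg_{-1})\cdot o \cong \fg_{-1}$. In this chart $Y_w$ is an affine subvariety whose Zariski tangent space is graded by $T$--weights, and the $(Z_\tti, Z_w)$--bigrading gives a combinatorial count of these weights. The count will show the tangent dimension strictly exceeds $|w| = |\Delta(w)|$: besides the expected $|\Delta(w_\e)|$ directions coming from the translated inversion set, one recovers the single direction $\e$ and one direction for each top--layer root $\n \in \Delta(w,\e) \cap \Delta(\fg_{1,\sfa})$. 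The highest--weight property defining $\Pi_{1,\sfa-1}$ is precisely what guarantees these extra directions are not absorbed by the defining equations of $Y_w$.

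Third, I would show these exhaust the components. Since $\tSing(X_w)$ is $B$--invariant and closed, it is a union of Schubert varieties; let $X_{w'}$ be an irreducible component, so $w'$ is maximal among elements $v < w$ with $X_v \subset \tSing(X_w)$. Reversing the tangent count, the overcount at the $w'$--fixed point localises to a root $\e$ in the penultimate layer $\Delta(\fg_{1,\sfa-1})$ of $\Delta(w)$ satisfying the highest--weight property that defines $\Pi_{1,\sfa-1}$; maximality of $w'$ then forces $\Delta(w) \setminus \Delta(w') = \Delta(w,\e)$, hence $w' = w_\e$. The assignment $\e \mapsto w_\e$ is injective because $\e \in \Delta(w,\e)$, so distinct $\e$ yield distinct complements $\Delta(w,\e)$. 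The crux of the whole argument is the tangent--space count in the second step: one must carefully track the $(Z_\tti, Z_w)$--bigrading after translating the basepoint from $o$ to the $w_\e$--fixed point, and identify which surplus tangent directions survive the local equations. Cominuscule abelianness of $\fg_{\pm 1}$ (Remark \ref{R:abelian}) linearizes the local model, but matching the representation--theoretic condition $\e \in \Pi_{1,\sfa-1}$ with the precise overcount demands delicate bookkeeping, and is the main obstacle I expect to work through.
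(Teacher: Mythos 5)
Your first step (Lemma \ref{L:we}) is essentially sound and matches the paper's argument in spirit: in the cominuscule setting inversion sets are exactly the subsets of $\Delta(\fg_1)$ whose complement in $\Delta^+$ is closed, and one checks that $\Delta(w,\e)$ is an upper set in $\Delta(w)$. (One small correction: adding a root of $\fg_{0,\ell}$, $\ell\ge 1$, to a root of $\fg_{1,\sfa-1}$ does not push the weight out of $\fg_1$; it pushes it into $\fg_{1,\ge\sfa}$, hence out of $\Delta(w)$ only when $\ell\ge 2$. The delicate case $\b\in\Delta^+(\fg_{0,0})$ acting on $\n=\e+\n'\in\Delta(\fg_{1,\sfa})$ is not a pure bigrading chase; it needs the Jacobi identity together with the highest-weight property of $\e$ to conclude $\n'+\b\in\Delta(\fg_{0,1})$.)

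The genuine gap is in your second and third steps. The entire proof reduces to a tangent-space formula that you never establish: you assert that $\dim T_{e_{w'}}X_w$ exceeds $|\Delta(w)|$ exactly when $\Delta(w)\setminus\Delta(w')\not\subset\Delta(\fg_{1,\sfa})$, and that the ``surplus directions'' are governed by the highest-weight condition defining $\Pi_{1,\sfa-1}$, but no computation of the local equations of $Y_w$ in the chart at $w_\e$ is given, and you explicitly defer the ``delicate bookkeeping.'' Determining $T_{e_{w'}}X_w$ at a torus-fixed point of a cominuscule Schubert variety is itself a nontrivial theorem (of Lakshmibai--Weyman/Brion--Polo type), so without it neither the containment $X_{w_\e}\subset\tSing(X_w)$ nor the converse (that $\Delta(w)\setminus\Delta(w_1)\subset\Delta(\fg_{1,\sfa})$ forces smoothness along $X_{w_1}$) is proved. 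The paper avoids tangent spaces altogether: it invokes Brion--Polo's theorem that the smooth locus of $Y_w$ is the orbit $G_w\cdot o$ of the base point under the stabilizer, computes that stabilizer explicitly as $\fg_w=\fn_w\op\fg_{0,\ge0}\op\fg_{1,\ge\sfa}=\tilde\fg_{\ge0}$ (Lemma \ref{L:gw}), and then decides membership of $\tau N_{w_1}\cdot o$ in $G_w\cdot o$ by checking whether the relevant reflections $r_\c$ lie in $G_w$ (which happens iff $\c\in\Delta(\fg_{1,\sfa})$) and, crucially, by a separate argument showing $r_\e\notin G_wP$ for $\e\in\Pi_{1,\sfa-1}$. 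That last exclusion, which is the heart of Lemma \ref{L:singXw}, has no counterpart in your sketch. The maximality analysis in your third step also needs the root-chain argument of Lemma \ref{L:wemax} (passing through the highest root of $\fg$ and then down to the highest $\fg_{0,0}$-weight of the relevant irreducible summand of $\fg_{1,\sfa-1}$) to see that any $w_1$ violating $\Delta(w)\setminus\Delta(w_1)\subset\Delta(\fg_{1,\sfa})$ satisfies $\Delta(w_1)\subset\Delta(w_\e)$ for some $\e$; ``reversing the tangent count'' does not by itself produce this.
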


\noindent The lemma and theorem are proved in Section \ref{S:lemmas}.

\begin{example*} 
Theorem \ref{T:sing} generalizes the well-known descriptions \cite[Section 9.3]{MR1782635} of the singular locus of $X_w$ in the case that $G$ is classical.  Here is a simple illustration in the case that $X = \tGr(5,11)$ is a Grassmannian.  Assume the notations and definitions of Section \ref{S:aJA}.  In particular, the subalgebra $\fg_{-1} \subset \fsl_n\bC$ is spanned by $\{ e^j_k \ | \ 1 \le j \le 5 \,, \ 6 \le k \le 11\}$.  The basis element $e^j_{k+1} \in \fg_{-1}$ is a root vector for the root
$$
  -\a_{jk} \ := \ - \left( \a_j + \cdots + \a_k \right) \,. 
$$

Fix $w \in W^\fp$ with $\sfa(w)=2$ and $\ttJ(w) = \{ 2 \,,\, 3 \,,\, 6 \,,\, 8 \,,\, 10 \}$.  
Any element $s^k_j e_k^j$ of $\fg_{-1}$ may be represented by a matrix $(s^j_k)$ where $1 \le j \le 5$ and $6 < k \le 11$.  The corresponding $Z_w$--degrees are given by
\begin{small}
$$
-\left[\begin{array}{cc|c|cc}
  2 & 2 & 1 & 0 & 0 \\ \hline 
  3 & 3 & 2 & 1 & 1 \\ 3 & 3 & 2 & 1 & 1 \\ \hline
  4 & 4 & 3 & 2 & 2 \\ 4 & 4 & 3 & 2 & 2 \\ \hline
  5 & 5 & 4 & 3 & 3 
\end{array}\right] \,.
$$
\end{small}
So the subspace $\fn_w \subset \fg_{-1}$ is represented by  
\begin{small}
$$ \renewcommand{\arraystretch}{1.2}
  \fn_w \ = \ \left( \begin{array}{cc|c|cc} 
    s^6_1 & s^6_2 & s^6_3 & s^6_4 & s^6_5 \\ \hline
    0 & 0 & s^7_3 & s^7_4 & s^7_5 \\ 
    0 & 0 & s^8_3 & s^8_4 & s^8_5 \\ \hline
    0 & 0 & 0 & s^9_4 & s^9_5 \\ 
    0 & 0 & 0 & s^{10}_4 & s^{10}_5 \\ \hline
    0 & 0 & 0 & 0 & 0
  \end{array} \right) \, .
$$
\end{small}
Define a filtration $F^3 \subset F^6 \subset F^{10} \subset \bC^{11}$ by $F^3 = \langle e_1 , e_2 , e_6 \rangle$, $F^6 = \langle F^3 , e_3 , e_7 , e_8 \rangle$ and $F^{10} = \langle F^6 , e_4 , e_5 , e_9 , e_{10} \rangle$.  Then \eqref{E:Xw} yields $X_w = w^{-1} Y_w$, with
$$
  Y_w \ = \ \{ E \in \tGr(5,11) \ | \ \tdim(E \cap F^3) \ge 2 \,, \ 
  \tdim(E \cap F^6) \ge 3 \,,\ \tdim(E\cap F^{10}) \ge 5 \} \,.
$$

The subalgebra $\fg_{0,0}$ is identified with the diagonal block matrices $\tdiag(2,1,2,1,2,2,1)$ in $\fsl_{11}$.  The $\fg_{0,0}$--module $\fg_{-1,-1}$ consists of two irreducible submodules.  The first is spanned by $e^3_6$, with highest weight $\e_1 = -\a_{35}$.  The second is four-dimensional with highest weight vector $e^4_8$ and highest weight $\e_2 = -\a_{47}$.  We have $\Delta(w,\e_1) = \{ \a_{15}\,,\, \a_{25}\,,\,\a_{35} \,,\, \a_{36}\,,\,\a_{37}\}$ and $\Delta(w,\e_2) = \{ \a_{37}\,,\,\a_{47}\,,\,\a_{48}\,,\,\a_{49} \}$.\footnote{Note that the associated subspaces of $\fn_w$ are `hooks' that are added to $\pi$ to obtain the partitions associated with the irreducible components of $\tSing(X_\pi)$.}  The two irreducible components $X_{w_1} = w_1^{-1}Y_{w_1}$ and $X_{w_2} = w_2^{-1} Y_{w_2}$ of $\tSing(X_w)$ correspond to 
\begin{small}
$$ \renewcommand{\arraystretch}{1.2}
  \fn_{w_1} \ = \ \left( \begin{array}{cc|c|cc} 
    0 & 0 & 0 & s^6_4 & s^6_5 \\ \hline
    0 & 0 & 0 & s^7_4 & s^7_5 \\
    0 & 0 & 0 & s^8_4 & s^8_5 \\ \hline
    0 & 0 & 0 & s^9_4 & s^9_5 \\
    0 & 0 & 0 & s^{10}_4 & s^{10}_5 \\ \hline
    0 & 0 & 0 & 0 & 0
  \end{array} \right)
  \quad\hbox{ and } \quad
  \fn_{w_2} \ = \ \left( \begin{array}{cc|c|cc} 
    s^6_1 & s^6_2 & s^6_3 & s^6_4 & s^6_5 \\ \hline
    0 & 0 & s^7_3 & s^7_4 & s^7_5 \\
    0 & 0 & 0 & s^8_4 & s^8_5 \\ \hline
    0 & 0 & 0 & 0 & s^9_5 \\
    0 & 0 & 0 & 0 & s^{10}_5 \\ \hline
    0 & 0 & 0 & 0 & 0
  \end{array} \right) \,.
$$
\end{small}
The corresponding $\sfa$ and $\ttJ$ values are $\sfa(w_1) = 0$ and $\ttJ(w_2) = \{ 3 \,,\, 10 \}$; and $\sfa(w_2) = 2$ and $\ttJ_{w_2} = \{ 2\,,\,4\,,\, 6\,,\,7\,,\,10\}$.
\end{example*}

\begin{corollary} \label{C:a=1}
The Schubert variety $X_w$ is smooth if and only if $\sfa(w)=0$.  If $\sfa(w)=1$, then $\tSing(X_w)$ is a single irreducible Schubert variety.
\end{corollary}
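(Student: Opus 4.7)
My strategy is to deduce both assertions from Theorem \ref{T:sing} by describing when $\Pi_{1,\sfa-1}$ is empty or has a single element, invoking only the structure of $\fg_1$ as an irreducible $\fg_0$-module and its induced $Z_w$-grading.

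\textbf{Smoothness iff $\sfa = 0$.} If $\sfa = 0$, then $\fg_{1,-1} = 0$ by the sign rule $k\ell<0 \Rightarrow \fg_{k,\ell}=0$, so $\Pi_{1,-1}=\emptyset$ and Theorem \ref{T:sing} gives $\tSing(X_w)=\emptyset$. Conversely, if $\sfa\ge 1$ I aim to show $\fg_{1,\sfa-1}\neq 0$. By \eqref{E:nw_aJ} some $\a\in\Delta(w)\subset\Delta(\fg_1)$ satisfies $\a(Z_w)=\sfa$, so $\fg_{1,\sfa}\neq 0$. I then join $\a_\tti$ (which has $Z_w$-value $0$ and is the lowest weight of the simple $\fg_0$-module $\fg_1$) to $\a$ by a chain in $\Delta(\fg_1)$ whose consecutive terms differ by a simple root $\a_\ttj$ with $\ttj\neq\tti$; such a chain exists because the weights of a simple module for a reductive Lie algebra are connected under single simple-root steps. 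Each step changes $Z_w$ by $0$ or $1$, so every integer in $[0,\sfa]$ is realized in $\Delta(\fg_1)$. In particular $\fg_{1,\sfa-1}\neq 0$, $\Pi_{1,\sfa-1}\neq\emptyset$, and $X_w$ is singular.

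\textbf{Uniqueness when $\sfa = 1$.} It suffices to show that $\fg_{1,0}$ is irreducible as a $\fg_{0,0}$-module, since then $|\Pi_{1,0}|=1$. I plan to fix a nonzero $v\in\fg_{\a_\tti}$ and argue that (i) $v$ generates $\fg_{1,0}$ as a $\fg_{0,0}$-module, and (ii) the $\a_\tti$-weight space of $\fg_{1,0}$ is $1$-dimensional, tautologically since it is the root space $\fg_{\a_\tti}$. For (i), both $\fg_{0,<0}$ and the lowering operators $e_{-\b}$ with $\b\in\Delta^+(\fg_{0,0})$ annihilate $v$: the candidates $\a_\tti+\d$ (for $\d\in\Delta(\fg_{0,<0})$) and $\a_\tti-\b$ have coefficient $+1$ on $\a_\tti$ together with non-positive coefficients on the other simple roots, so neither is a root. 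Hence $\fg_1 = U(\fg_0)v = U(\fg_{0,\ge 0})v$, and the PBW factorization $U(\fg_{0,\ge 0}) = U(\fg_{0,>0})\,U(\fg_{0,0})$ lets me intersect with the $Z_w$-weight-$0$ piece, killing the contribution of $U(\fg_{0,>0})$ above the scalar term and leaving $\fg_{1,0}=U(\fg_{0,0})v$. For irreducibility, decompose $\fg_{1,0}=\oplus_i M_i$ into $\fg_{0,0}$-irreducibles; by (ii), $v$ lies in a single summand $M_{i_0}$, so $\fg_{1,0}=U(\fg_{0,0})v \subseteq M_{i_0}$ and $\fg_{1,0}=M_{i_0}$ is irreducible.

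\textbf{Main obstacle.} The delicate step is the cyclicity identity $\fg_{1,0}=U(\fg_{0,0})v$: one must track the $Z_w$-grading and the $\fg_0$-module structure simultaneously and verify that the $U(\fg_{0,>0})$ factor contributes nothing to the $Z_w$-weight-$0$ component beyond what $U(\fg_{0,0})$ already supplies. The connectedness argument from the smoothness half is similar in flavor but easier, invoking only the standard weight-graph connectivity for simple modules of reductive Lie algebras.
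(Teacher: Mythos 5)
Your proof is correct and follows essentially the same route as the paper: both deduce the corollary from Theorem \ref{T:sing} via the vanishing of $\fg_{1,-1}$ (for $\sfa=0$) and the irreducibility of $\fg_{1,0}$ as a $\fg_{0,0}$--module (for $\sfa=1$). The only difference is that you supply self-contained arguments (weight-connectivity for the nonvanishing of $\fg_{1,\sfa-1}$ when $\sfa\ge1$, and the cyclicity/PBW argument for irreducibility of $\fg_{1,0}$) for facts the paper simply cites to \cite{MR2960030}, and both arguments check out.
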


\begin{remark*}
The first part of the corollary was proved in \cite[Proposition 3.9]{MR2960030}.
\end{remark*}

\begin{proof}
The first statement is an immediate consequence of $\fg_{1,\sfa-1} = \fg_{1,-1} = \{0\}$.  The second statement is a consequence of the fact that $\fg_{1,\sfa-1} = \fg_{1,0}$ is an irreducible $\fg_{0,0}$--module, see \cite[Section 3.2]{MR2960030}.
\end{proof}

The following is an immediate consequence of Theorem \ref{T:sing}.

\begin{corollary} \label{C:|sing|}
The number of irreducible components in $\tSing(X_w)$ is the number $|\Pi_{1,\sfa-1}|$ of components in a decomposition of $\fg_{1,\sfa-1}$ into irreducible $\fg_{0,0}$--submodules.
\end{corollary}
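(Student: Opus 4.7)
The proof plan is essentially a bookkeeping exercise that repackages Theorem \ref{T:sing} in representation-theoretic language; there is no serious obstacle.

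The first step is to apply Theorem \ref{T:sing} directly. By that theorem, the assignment $\e \mapsto w_\e$ gives a bijection between $\Pi_{1,\sfa-1}$ and $\tSing_w$, so the number of irreducible components of $\tSing(X_w)$ is precisely $|\Pi_{1,\sfa-1}|$. This reduces the corollary to identifying $|\Pi_{1,\sfa-1}|$ with the number of irreducible $\fg_{0,0}$--summands of $\fg_{1,\sfa-1}$.

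The second step is to observe that $\fg_{1,\sfa-1}$ is a $\fg_{0,0}$--module. Indeed, the bigraded bracket relations $[\fg_{k,\ell},\fg_{k',\ell'}] \subset \fg_{k+k',\ell+\ell'}$ (a consequence of the Jacobi identity applied to the commuting semisimple elements $Z_\tti$ and $Z_w$) give $[\fg_{0,0}\,,\,\fg_{1,\sfa-1}] \subset \fg_{1,\sfa-1}$. Moreover, $\fg_{0,0}$ is the centralizer in $\fg_0$ of the semisimple element $Z_w$, hence a Levi (in particular reductive) subalgebra; and $\fg_{1,\sfa-1}$, being a $Z_w$--eigenspace inside the semisimple $\fg_0$--module $\fg_1$, is itself completely reducible as a $\fg_{0,0}$--module.

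The third and last step is purely tautological: in any finite-dimensional semisimple module over a reductive Lie algebra, the number of irreducible summands equals the dimension of the space of highest weight vectors (relative to a fixed Borel), which in turn equals the number of highest weights appearing. By the definition of $\Pi_{1,\sfa-1}$ as the set of highest weights of the $\fg_{0,0}$--module $\fg_{1,\sfa-1}$ (with respect to $\fb \cap \fg_{0,0}$, whose positive roots are $\Delta^+(\fg_{0,0})$), this count is exactly $|\Pi_{1,\sfa-1}|$. Combining with the first step gives the corollary.
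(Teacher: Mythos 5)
Your proposal is correct and matches the paper, which simply records this corollary as an immediate consequence of Theorem \ref{T:sing} together with the definition of $\Pi_{1,\sfa-1}$ as the set of highest weights of the $\fg_{0,0}$--module $\fg_{1,\sfa-1}$. The only point worth making explicit in your third step is that each element of $\Pi_{1,\sfa-1}$ is a root, hence occurs with multiplicity one in $\fg_{1,\sfa-1}$, so the number of distinct highest weights really does equal the number of irreducible summands.
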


\noindent From Corollary \ref{C:|sing|}, and the $(\sfa,\ttJ)$--characterizations of Sections \ref{S:aJA}--\ref{S:aJD} we deduce 

\begin{corollary} \label{C:sing}
Let $X_w$ be a Schubert variety in a classical, irreducible cominuscule $X = G/P$, and let $|\tSing_w|$ be the number of irreducible components in $\tSing(X_w)$.
\begin{a_list_emph}
  \item If $X = \tGr(\tti,n+1)$, then $|\tSing_w| = \sfa(w)$.
  \item If $X = \tLG(n,2n)$, then $|\tSing_w| = \lceil \sfa(w)/2 \rceil$.
  \item Suppose $X = \cS_n$ and assume $\sfa(w) > 1$.  Set $\sfr = \lceil \half (\sfa+\a_{n-1}(Z_w)) \rceil$.  If $1 = \ttj_{\sfr-1} - \ttj_{\sfr}$, then $|\tSing_w| = \lfloor \half (\sfa+\a_{n-1}(Z_w))\rfloor \in\{\sfr-1,\sfr\}$; 
         otherwise $|\tSing_w| = \sfr$.
\end{a_list_emph}
\end{corollary}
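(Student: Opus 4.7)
The plan is to invoke Corollary \ref{C:|sing|}, which reduces the task to counting the number of irreducible $\fg_{0,0}$--submodules of $\fg_{1,\sfa-1}$ in each classical type. For every such $X$, I will use the explicit $(\sfa,\ttJ)$--descriptions of Sections \ref{S:aJA}--\ref{S:aJD} to write out the $(Z_\tti,Z_w)$--bigraded decomposition of $\fg_1$ as a $\fg_{0,0}$--module, and then enumerate the irreducible summands lying in bidegree $(1,\sfa-1)$.

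For case (a), $X=\tGr(\tti,n+1)$, the semisimple part of $\fg_{0,0}$ is a product of $\fsl$--factors indexed by the blocks cut out by $\{\tti\}\cup\ttJ$. The $\fg_{0,0}$--module $\fg_1$ decomposes as a direct sum of irreducible bimodules $\bC^{R_a}\otimes(\bC^{C_b})^*$, one for each pair $(a,b)$ consisting of a row--block above $\tti$ and a column--block below $\tti$ (compare the matrix picture in the example preceding Corollary \ref{C:a=1}). The $Z_w$--degree of such a summand equals one plus the number of $\ttj\in\ttJ$ strictly separating $R_a$ from $C_b$. A short induction on $\sfa$, following the ``staircase'' of bidegrees visible in the example, shows that exactly $\sfa$ pairs $(a,b)$ have bidegree $(1,\sfa-1)$, which yields $|\tSing_w|=\sfa(w)$.

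Cases (b) and (c) are parallel, using the identifications $\fg_1\simeq\tSym^2 V$ for $\tLG(n,2n)$ and $\fg_1\simeq\tw^2 V$ for $\cS_n$, where $V$ is the natural $\fg_{0,0}$--module. The irreducibles in bidegree $(1,\sfa-1)$ arise from the pairs $(a,b)$ of the type A count, either identified (symmetric case) or antisymmetrized (skew case). In the Lagrangian case this halves the type A count, modulo the contribution of diagonal $(a,a)$--blocks, producing $\lceil\sfa/2\rceil$. In the spinor case antisymmetry forces the diagonal blocks to vanish; the quantity $\a_{n-1}(Z_w)$ records whether node $n-1$ lies in $\ttJ$ and therefore shifts the parity of the bidegree stratification induced by the $D_n$ fork, while the adjacency condition $\ttj_{\sfr-1}-\ttj_\sfr=1$ detects whether two neighbouring blocks merge into a single irreducible $\fg_{0,0}$--summand. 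Collating these two observations gives the two sub-cases of the stated formula.

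The main obstacle will be the bookkeeping in case (c): the $D_n$ Dynkin diagram has a fork at the cominuscule node, and the interaction of this fork with the position of node $n-1$ relative to $\ttJ$ and with the adjacency of $\ttj_{\sfr-1}$ and $\ttj_\sfr$ is what forces the case split in the formula. Cases (a) and (b) reduce quickly to direct block counts once the $\fg_{0,0}$--module structure has been written down.
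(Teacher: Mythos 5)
Your proposal follows the paper's proof essentially verbatim: the paper likewise reduces everything to Corollary \ref{C:|sing|} and then, using the $(\sfa,\ttJ)$--data of Sections \ref{S:aJA}--\ref{S:aJD}, enumerates the highest $\fg_{0,0}$--weights of $\fg_{1,\sfa-1}$ (exactly your block--pair count, the pairs $(\ell,m)$ with $\ell+m=\sfa+1$), and it too leaves case (c) to the reader as a more tedious variant of (b). The only caveat is that your degree rule ``one plus the number of $\ttj\in\ttJ$ strictly separating $R_a$ from $C_b$'' is off by one --- the summand attached to the $\ell$-th row block and $m$-th column block has $Z_w$--degree $\ell+m$, which is \emph{two} more than the number of strictly separating marked nodes --- but this bookkeeping slip does not affect the final count.
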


\begin{remark*}
The singular loci of Schubert varieties in quadric hypersurfaces $B_n/P_1$ and $D_n/P_1$ are so simple that I omitted them from the corollary.  See \cite[Section 9.3]{MR1782635}.
\end{remark*}

\begin{proof}[Proof of Corollary \ref{C:sing}(a)]
Adopt the notation of Appendix \ref{S:aJA}, and assume $\sfa>0$.  Then the highest $\fg_{0,0}$--weights of $\fg_{1,\sfa-1}$ are 
$$
  \e \ = \ \a_{\ttj_\ell+1} + \cdots + \a_\tti + \cdots + \a_{\ttk_m-1} \,,
$$
with $0 < \ell,m$ and $\ell+m = \sfa+1$.  Thus $|\Pi_{1,\sfa-1}| = \sfa(w)$.
\end{proof}

\begin{proof}[Proof of Corollary \ref{C:sing}(b)]
Adopt the notation of Appendix \ref{S:aJA}, and assume $\sfa>0$.  Suppose that $\sfa = 2\sfs$.  Then the highest $\fg_{0,0}$--weights of $\fg_{1,\sfa-1}$ are 
\begin{equation} \label{E:e} \tag{$\ast$}
  \e \ = \ \a_{\ttj_m+1} + \cdots + \a_{\ttj_\ell} 
            + 2 (\a_{\ttj_\ell+1} + \cdots + \a_{n-1}) + \a_n \, , 
\end{equation}
with $1 \le \ell < m$ and $\ell + m = \sfa+1 = 2\sfs+1$.  Therefore, $|\Pi_{1,\sfa-1}| = \sfs = \lceil \sfa(w)/2 \rceil$.

If $\sfa = 2\sfs-1$, then the highest $\fg_{0,0}$--weights of $\fg_{1,\sfa-1}$ are $(\ast)$ and 
$$
  \e \ = \ 2 (\a_{\ttj_\sfs+1} + \cdots + \a_{n-1}) + \a_n \, .
$$
Thus, $|\Pi_{1,\sfa-1}| = \sfs = \lceil \sfa(w)/2 \rceil$.
\end{proof}

\noindent The proof of Corollary \ref{C:scd}(c), which is very similar to, though more tedious than, that of Corollary \ref{C:scd}(b), is left to the reader.

\subsection{The exceptional cases} \label{S:singE}

The irreducible components of $\tSing(X_w)$ have been determined in the case that $G$ is classical and $P$ is (co)minuscule; see \cite[Section 9.3]{MR1782635}.  A fourth corollary of Theorem \ref{T:sing} is an explicit description of the singular locus of the Schubert varieties in the exceptional Cayley plane $E_6/P_6$ and Freudenthal variety $E_7/P_7$ (both of which are minuscule and cominuscule).  See Tables \ref{t:E6} and \ref{t:E7} on pages \pageref{t:E6} and \pageref{t:E7}, respectively.  The tables are obtained with the assistance of \cite{LiE}.

\subsubsection*{Key to Tables \ref{t:E6} and \ref{t:E7}}  
Each row represents a proper ($\not= o , X$) Schubert variety $X_w$ of $X$, indexed by $w\in W^\fp\backslash\{1,w_0\}$.  The first column is the dimension of $X_w$; the second column expresses $w$ as a reduced product of simple reflections, acting on the left; the third column gives the corresponding $\sfa(w):\ttJ(w)$ values, see Section \ref{S:aJ}; and the fourth column lists the irreducible components of the singular locus in terms of their $\sfa:\ttJ$ characterization.  (See also Figures \ref{f:E6} and \ref{f:E7} on pages \pageref{f:E6} and \pageref{f:E7}, respectively.)

\begin{remarks*}
From the tables, we see that:
\begin{a_list}
\item The the irreducible components $\{ X_{w_\e} \ | \ \e\in\Pi_{1,\sfa-1} \}$ of the singular locus of a Schubert variety $X_w$ in $E_6/P_6$ or $E_7/P_7$ satisfy $\tcodim_{X_w}X_{w_\e} \ge 3$.
\item The singular locus of a Schubert variety in the Cayley plane consists of at most one irreducible component.
\item The singular locus of a Schubert variety in the Freudenthal variety consists of at most two irreducible components.
\end{a_list}
\end{remarks*}

\noindent
\begin{table}[htb]
\caption{Schubert varieties and their singular loci in the Cayley plane.}
\label{t:E6}
\renewcommand{\arraystretch}{1.2}
\noindent
\begin{tabular}{|c|l|l|l|}
\hline
dim & $w$ & $\sfa:\ttJ$ & $\tSing_w$ \\ \hline\hline
1 & 6 & 0:5 &  \\ \hline
2 & 65 & 0:4 &  \\ \hline
3 & 654 & 0:23 & \\ \hline
4 & 6542 & 0:3 & \\ \hline
4 & 6543 & 0:12 & \\ \hline
5 & 65432 & 1:123 & 0:4  \\ \hline
5 & 65431 & 0:2 & \\ \hline
6 & 654321 & 1:23 & 0:4  \\ \hline
6 & 654324 & 1:14 & 0:5  \\ \hline
7 & 6543241 & 2:124 & 0:3  \\ \hline
7 & 6543245 & 1:15 & $o$  \\ \hline
8 & 65432413 & 1:4 & 0:5  \\ \hline
8 & 65432451 & 2:125 & 0:3  \\ \hline
\end{tabular}
\hspace{10pt}
\begin{tabular}{|c|l|l|l|}
\hline
dim & $w$ & $\sfa:\ttJ$ & $\tSing_w$ \\ \hline\hline
8 & 65432456 & 0:1 & \\ \hline
9 & 654324513 & 3:145 & 1:23  \\ \hline
9 & 654324561 & 1:12 & 0:3  \\ \hline
10 & 6543245134 & 2:35 & 0:2  \\ \hline
10 & 6543245613 & 2:14 & 1:23  \\ \hline
11 & 65432451342 & 1:5 & $o$  \\ \hline
11 & 65432456134 & 3:135 & 1:4  \\ \hline
12 & 654324561342 & 2:15 & 1:4  \\ \hline
12 & 654324561345 & 1:3 & 0:2  \\ \hline
13 & 6543245613452 & 3:35 & 2:14  \\ \hline
14 & 65432456134524 & 2:4 & 1:12  \\ \hline
15 & 654324561345243 & 1:2 &  0:1  \\ \hline
\multicolumn{4}{c}{}
\end{tabular}
\addcontentsline{toc}{section}{Table \ref{t:E6}: Singular loci of Schubert varieties in $E_6/P_6$}
\end{table}

\begin{table}[htb]
\caption{Schubert varieties and their singular loci in the Freudenthal variety.} \label{t:E7}
\begin{small}
\renewcommand{\arraystretch}{1.2}
\begin{tabular}{|c|l|l|l|}
\hline
  dim & $w$ & $\sfa:\ttJ$ & $\tSing_w$ \\ \hline\hline
     1 & 7 & 0:6 & \\ \hline
     2 & 76 & 0:5 & \\ \hline
     3 & 765 & 0:4 & \\ \hline
     4 & 7654 & 0:23 & \\ \hline
     5 & 76542 & 0:3 & \\ \hline
     5 & 76543 & 0:12 & \\ \hline
     6 & 765432 & 1:123 & 0:4  \\ \hline
     6 & 765431 & 0:2 & \\ \hline
     7 & 7654321 & 1:23 & 0:4  \\ \hline
     7 & 7654324 & 1:14 & 0:5  \\ \hline
     8 & 76543241 & 2:124 & 0:3  \\ \hline
     8 & 76543245 & 1:15 & 0:6  \\ \hline
     9 & 765432413 & 1:4 & 0:5  \\ \hline
     9 & 765432451 & 2:125 & 0:3  \\ \hline
     9 & 765432456 & 1:16 &  $o$  \\ \hline
     10 & 7654324513 & 3:145 & 1:23  \\ \hline
     10 & 7654324561 & 2:126 & 0:3  \\ \hline
     10 & 7654324567 & 0:1 & \\ \hline
     11 & 76543245134 & 2:35 & 0:2  \\ \hline
     11 & 76543245613 & 3:146 & 1:23  \\ \hline
     11 & 76543245671 & 1:12 & 0:3  \\ \hline
     12 & 765432451342 & 1:5 & 0:6  \\ \hline
\end{tabular}
\hspace{5pt}
\begin{tabular}{|c|l|l|l|}
\hline
  dim & $w$ & $\sfa:\ttJ$ & $\tSing_w$ \\ \hline\hline
     12 & 765432456134 & 4:1356 & 1:4  \\ \hline
     12 & 765432456713 & 2:14 & 1:23  \\ \hline
     13 & 7654324561342 & 3:156 & 1:4  \\ \hline
     13 & 7654324561345 & 2:36 & 0:2  \\ \hline
     13 & 7654324567134 & 3:135 & 1:4  \\ \hline
     14 & 76543245613452 & 4:356 & 3:146  \\ \hline
     14 & 76543245671342 & 2:15 & 1:4  \\ \hline
     14 & 76543245671345 & 3:136 & 2:35  \\ \hline
     15 & 765432456134524 & 3:46 & 2:126  \\ \hline
     15 & 765432456713452 & 5:1356 & 1:5, 2:14  \\ \hline
     15 & 765432456713456 & 1:3 & 0:2  \\ \hline
     16 & 7654324561345243 & 2:26 & 1:16  \\ \hline
     16 & 7654324567134524 & 4:146 & 1:5, 1:12  \\ \hline
     16 & 7654324567134562 & 3:35 & 2:14  \\ \hline
     17 & 76543245613452431 & 1:6 & $o$  \\ \hline
     17 & 76543245671345243 & 3:126 & 1:5, 0:1  \\ \hline
     17 & 76543245671345624 & 5:346 & 2:15  \\ \hline
     18 & 765432456713452431 & 2:16 & 1:5  \\ \hline
     18 & 765432456713456243 & 4:236 & 2:15  \\ \hline
     18 & 765432456713456245 & 2:4 & 1:12  \\ \hline
     19 & 7654324567134562431 & 3:36 & 2:15  \\ \hline
     19 & 7654324567134562453 & 5:246 & 3:35  \\ \hline
\end{tabular} \\
\vspace{10pt}
\begin{tabular}{|c|l|l|l|}
\hline
    dim & $w$ & $\sfa:\ttJ$ & $\tSing_w$ \\ \hline\hline
     20 & 76543245671345624531 & 4:46 & 3:35  \\ \hline
     20 & 76543245671345624534 & 3:25 & 1:3  \\ \hline
     21 & 765432456713456245341 & 5:256 & 2:4  \\ \hline
     21 & 765432456713456245342 & 1:2 & 0:1  \\ \hline
     22 & 7654324567134562453421 & 3:26 & 2:4  \\ \hline
     22 & 7654324567134562453413 & 2:5 & 1:3  \\ \hline
     23 & 76543245671345624534132 & 4:25 & 4:46  \\ \hline
     24 & 765432456713456245341324 & 3:4 & 3:36  \\ \hline
     25 & 7654324567134562453413245 & 2:3 & 2:16  \\ \hline
     26 & 76543245671345624534132456 & 1:1 & 1:6  \\ \hline
\end{tabular}  
\end{small}
\addcontentsline{toc}{section}{Table \ref{t:E7}: Singular loci of Schubert varieties in $E_7/P_7$}
\end{table}

\subsection{Codimension of the singular locus} 

Let $\e\in\Pi_{1,\sfa-1}$.  By Lemma \ref{L:we} and Theorem \ref{T:sing}, the irreducible component $X_{w_\e}\subset\tSing(X_w)$ has codimension $|\Delta(w,\e)|$.  In this section we characterize the Schubert varieties $X_w$ for which the codimension is minimal.  Zelevinski{\u\i} \cite{MR705051} showed that every Schubert variety in the Grassmannian admits a small resolution.  So, in particular, we know that $\tcodim_{X_w}X_{w_\e} \ge 3$ for all $X_w \subset \tGr(\tti,n+1)$.  This inequality also holds (and is sharp) for Schubert varieties in the spinor variety $\cS_n = D_n/P_n$.  On the other hand, the Lagrangian Grassmannian admits Schubert varieties with $\tcodim_{X_w}X_{w_\e} =2$.

\begin{corollary} \label{C:scd}
Let $X =G/P$ be cominuscule.  Fix $w \in W^\fp\backslash\{1,w_0\}$ with associated $\sfa,\ttJ$.  Let $\e\in\Pi_{1,\sfa-1}$.
\begin{a_list_emph}
\item  Suppose $X = \tGr(\tti,n+1) = A_n/P_\tti$.  Then $\tcodim_{X_w}X_{w_\e} \ge 3$.  Assume the notation of Section \ref{S:aJA}.   Equality holds if and only if there exist $0 < \ell \le \sfp$ and $0 < m \le \sfq$ such that $\ell+m = \sfa+1$ and $1=\ttj_\ell - \ttj_{\ell+1} = \ttk_{m+1} - \ttk_m$.
\item Suppose $X = \tLG(n,2n) = C_n/P_n$.  Then $\tcodim_{X_w}X_{w_\e} \ge 2$.  Assume the notation of Section \ref{S:aJC}.  Equality holds if and only if $\sfa= 2\ell-1 >0$ and $1 = \ttj_\ell - \ttj_{\ell+1}$.  In particular, these $X_w$ admit no small resolution.  
\item Suppose $X = \cS_n = D_n/P_n$.  Then $\tcodim_{X_w}X_{w_\e} \ge 3$.  Assume the notation of Section \ref{S:aJD}.  If $\ttj_1=n-1$ and $\sfa=1$, then equality holds if and only if $\ttj_2 = n-3$; if $\ttj_1 = n-1$ and $\sfa=2$, then equality holds if and only if $\ttj_2 = n-2$ and $\ttj_3 = n-4$.  In all other cases, equality holds if and only if there exist $\a_{n-1}(Z_w) < \ell < m$ such that $\ell+m = \sfa+1+\a_{n-1}(Z_w)$, and $1 + \d_{\ell\sfr} = \ttj_\ell - \ttj_{\ell+1}$ and $1 = \ttj_m - \ttj_{m+1}$.
\end{a_list_emph}
\end{corollary}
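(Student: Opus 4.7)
The plan is to combine Lemma \ref{L:we} and Theorem \ref{T:sing} with the definition \eqref{E:we} to rewrite the codimension as
\[
  \tcodim_{X_w}X_{w_\e} \ = \ |\Delta(w,\e)| \ = \ 1 \,+\, |\{\a \in \Delta(\fg_{0,1}) \mid \e + \a \in \Delta\}|,
\]
and then to enumerate the admissible $\a$ type by type using the explicit descriptions of $\Pi_{1,\sfa-1}$ from the proof of Corollary \ref{C:sing}, together with the notation of the appendix.  Observing that every root of $\fg_{0,1}$ is a positive root string crossing exactly one ``block break'' of $\ttJ$, the count of admissible $\a$ equals the number of simple-root strings one may concatenate to an endpoint of $\e$ (written as a sum of consecutive simple roots) so as to produce a new root of $\fg_1$.

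For part (a), I would take $\e=\a_{\ttj_\ell+1}+\cdots+\a_\tti+\cdots+\a_{\ttk_m-1}$ with $\ell+m=\sfa+1$.  Each of the two endpoints admits at least one extending string in $\Delta(\fg_{0,1})$: the block immediately to the left of $\ttj_\ell$, and the block immediately to the right of $\ttk_m$.  This forces $|\Delta(w,\e)|\ge3$.  Equality holds exactly when each of these extending strings consists of a single simple root, which translates to the numerical condition $1=\ttj_\ell-\ttj_{\ell+1}$ and $1=\ttk_{m+1}-\ttk_m$ asserted in (a).

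Parts (b) and (c) follow the same template, modified by the non-simply-laced node in type $C$ and the forked node in type $D$.  For (b), each highest weight $\e\in\Pi_{1,\sfa-1}$ contains the tail $2(\a_j+\cdots+\a_{n-1})+\a_n$, which ``freezes'' the right end of $\e$: the only admissible extensions are to the left.  Hence $|\Delta(w,\e)|\ge2$, with equality iff the single available left extension has length one, yielding the stated conditions $\sfa=2\ell-1$ and $1=\ttj_\ell-\ttj_{\ell+1}$.  For (c), I would enumerate $\e\in\Pi_{1,\sfa-1}$ according to the cases sketched in Corollary \ref{C:sing}(c), distinguishing subcases by the value of $\a_{n-1}(Z_w)$ and by whether $\ttj_1=n-1$, and in each case count extensions at both ends of $\e$ as above.

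The main obstacle is the case analysis in (c).  The fork at nodes $n-1,n$ makes $\Delta(\fg_{0,1})$ behave irregularly near the tail of the Dynkin diagram, so the parameter $\sfr$ must be tuned to isolate the extremal $\e$, and the low-$\sfa$ subcases with $\ttj_1=n-1$ (where $\e$ may not have room to extend in both directions) require separate inspection.  Once the combinatorial bookkeeping is complete, the ``at least one extension at each endpoint'' argument of (a) recovers the bound $|\Delta(w,\e)|\ge3$, and the equality condition unpacks into the conditions on $\ttj_\ell$ and $\ttj_m$ displayed in the corollary.
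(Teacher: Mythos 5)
Your reduction of the codimension to $|\Delta(w,\e)| = 1 + \#\{\a\in\Delta(\fg_{0,1}) \mid \e+\a\in\Delta\}$ and the type-by-type enumeration is exactly the paper's strategy, and your treatment of part (a) agrees with the paper's proof. However, your part (b) has a genuine gap, and it propagates into your sketch of (c).

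The problem is the claim that the doubled tail ``freezes the right end of $\e$,'' so that the only admissible extensions prepend simple roots on the left and hence $|\Delta(w,\e)|\ge 2$ with equality iff the single left extension has length one. This is false for the generic highest weights
$\e = \a_{\ttj_\ell+1}+\cdots+\a_{\ttj_m}+2(\a_{\ttj_m+1}+\cdots+\a_{n-1})+\a_n$, $\ell+m=\sfa+1$.
Besides $\n_1=\a_{\ttj_\ell}$ there is a second, independent extension point at the junction where the multiplicity jumps from one to two: the simple root $\n_2=\a_{\ttj_m}$ lies in $\Delta(\fg_{0,1})$, and
$\e+\n_2 = \a_{\ttj_\ell+1}+\cdots+\a_{\ttj_m-1}+2(\a_{\ttj_m}+\cdots+\a_{n-1})+\a_n$
is again a root, lying in $\Delta(\fg_{1,\sfa})$. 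When $\ell\ne m$ the three roots $\e$, $\e+\n_1$, $\e+\n_2$ are distinct, so $|\Delta(w,\e)|\ge 3$ for those $\e$; codimension two can occur only when $\n_1=\n_2$, i.e.\ $\ell=m$, which forces $\e=2(\a_{\ttj_\ell+1}+\cdots+\a_{n-1})+\a_n$ and $\sfa=2\ell-1$ odd. Your ``single left extension'' picture cannot produce this parity condition: taken at face value it would allow $\tcodim_{X_w}X_{w_\e}=2$ for even $\sfa$ whenever some $\ttj_\ell-\ttj_{\ell+1}=1$, contradicting the statement. (You also need to treat the additional highest weight $\e'=\a_{\ttj_\sfa+1}+\cdots+\a_n$, which has no doubled tail at all, as the paper does separately.) The same correction is needed in your plan for (c): in types $C$ and $D$ the two ``ends'' to be extended are the two places where the coefficient sequence of $\e$ drops, not the two ends of its support as a string of simple roots, and the equality analysis turns on when these two extension points coincide or are obstructed near the fork.
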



\begin{proof}[Proof of Corollary \ref{C:scd}(a)]
The elements of $\Pi_{1,\sfa-1}$ are of the form 
$$
  \e \ = \ \a_{\ttj_\ell+1} + \cdots + \a_{\ttk_m-1} \,, \quad\hbox{with}\quad
  \ell+m = \sfa+1 \quad\hbox{and}\quad 0 \le \ell,m\, .
$$
Both $\n_1 = \a_{\ttj_\ell}$ and $\n_2 =\a_{\ttk_m}$ are elements of $\Delta(\fg_{0,1})$, and $\e$, $\e+\n_1$ and $\e+\n_2$ are distinct elements of $\Delta(w,\e)$.  Thus $\tcodim_{X_w}X_{w_\e} = |\Delta(w,\e)| \ge 3$.  Additionally, $|\Delta(w,\e)| =3$ if and only if $1 = \ttj_\ell - \ttj_{\ell+1} = \ttk_{m+1} - \ttk_m$.
\end{proof}

\begin{proof}[Proof of Corollary \ref{C:scd}(b)]
The elements of $\Pi_{1,\sfa-1}$ are of the form $\e' = \a_{\ttj_\sfa+1} + \cdots + \a_n$, or
$$
  \e \ = \ \a_{\ttj_\ell+1} + \cdots + \a_{\ttj_m} + 
  2 ( \a_{\ttj_m+1} + \cdots + \a_{n-1}) + \a_n \,, 
$$
with $\ell+m = \sfa+1$ and $0 < m \le \ell$.  For $\e'$, observe that $\n_1=\a_{\ttj_\sfa}\in\Delta(\fg_{0,1})$ and $\e'$, $\e' + \n_1 \in \Delta(w,\e')$.  So $\tcodim_{X_w}X_{w_\e'} = |\Delta(w,\e')| \ge 2$.  Equality holds if and only if $1=\ttj_\sfa-\ttj_{\sfa+1}$ and $\sfa=1$.  (If $\sfa>1$, then $\n_2 = \a_{\ttj_1} + \cdots + \a_{n-1} \in \Delta(\fg_{0,1})$ and $\e' + \n_2 \in \Delta(w,\e)$ is distinct from $\e'$ and $\e'+\n_1$.)

For $\e$ with $\ell < m$, observe that both $\n_1 = \a_{\ttj_\ell}$ and $\n_2 =\a_{\ttj_m}$ are elements of $\Delta(\fg_{0,1})$, and $\e$, $\e+\n_1$ and $\e+\n_2$ are distinct elements of $\Delta(w,\e)$.  Thus $\tcodim_{X_w}X_{w_\e} = |\Delta(w,\e)| \ge 3$.  If $\ell=m$, then $\n_1=\n_2$.  So, $\tcodim_{X_w}X_{w_\e} = |\Delta(w,\e)| \ge 2$.  Equality holds if and only if $1 = \ttj_\ell - \ttj_{\ell+1}$.
\end{proof}

\noindent The proof of Corollary \ref{C:scd}(c), which is very similar to, though more involved than, that of Corollary \ref{C:scd}(b), is left to the reader.




\section{Proof of Theorem \ref{T:sing}} \label{S:prf}

\subsection{The stabilizer of \boldmath $Y_w$ \unboldmath } \label{S:stab}

Virtue of Proposition \ref{P:BP}, the stabilizer $\tStab(Y_w) = \{ g\in G \ | \ g Y_w = Y_w \}$ will play an important r\^ole in the proof of Theorem \ref{T:sing}.

\begin{proposition}[Brion--Polo {\cite{MR1703350}}] \label{P:BP}
The smooth locus $Y_w^0$ of $Y_w$ is the orbit of $o \in X$ under the stabilizer $G_w$.
\end{proposition}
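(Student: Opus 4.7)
The plan is to establish the two inclusions $G_w \cdot o \subseteq Y_w^0$ and $Y_w^0 \subseteq G_w \cdot o$ separately.

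For the first inclusion, I would verify that the basepoint $o$ itself lies in the smooth locus. By \eqref{E:Xw}, the $N_w$-orbit $N_w \cdot o$ is Zariski open in $Y_w$. The orbit map $n \mapsto n \cdot o$ is injective because $\fn_w \cap \fp = 0$: indeed $\fn_w \subseteq \fg_{-1}$ while $\fp = \fg_0 \oplus \fg_1$ by \eqref{E:p}. Since $\fg_{-1}$ is abelian by Remark~\ref{R:abelian}, the unipotent group $N_w$ is isomorphic to the affine space $\fn_w$ as an algebraic variety, so a neighborhood of $o$ in $Y_w$ is an affine space of the expected dimension $|\Delta(w)| = \dim Y_w$. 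Thus $o \in Y_w^0$, and because $G_w$ acts on $Y_w$ by algebraic automorphisms it preserves $Y_w^0$; therefore $G_w \cdot o \subseteq Y_w^0$.

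For the reverse inclusion, I would first observe that $N_w \subseteq G_w$ (since $N_w$ manifestly preserves $Y_w$), so $G_w \cdot o$ contains the dense set $N_w \cdot o$ and is itself open and dense in $Y_w$. The complement $Y_w \setminus G_w \cdot o$ is then a closed, proper, $G_w$-stable subvariety, hence a union of lower-dimensional $G_w$-orbits. By $G_w$-equivariance of smoothness it suffices to prove that a representative $T$-fixed point $o_{w'} = w' \cdot o$ in each such boundary orbit is a singular point of $Y_w$. I would compute the Zariski tangent space $T_{o_{w'}} Y_w$ as a $T$-module via the Bruhat decomposition of $Y_w$: each Schubert cell in $Y_w$ whose closure contains $o_{w'}$ contributes a root-weighted direction, and the total count of these root directions should strictly exceed $\dim Y_w$.

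The main obstacle is this tangent-space enumeration at the boundary $T$-fixed points. The cleanest approach uses the $(Z_\tti,Z_w)$-bigrading \eqref{E:gkl}: the unipotent radical of $G_w$ is encoded in the positive $Z_w$-weight spaces inside $\fg_0$, and at a $T$-fixed point not in the open $G_w$-orbit the failure of this radical to act transversally forces extra tangent directions coming from $\fg_{-1,\ell}$-components with $\ell > \sfa(w)$ that are not already accounted for by \eqref{E:nw_aJ}. Identifying these surplus directions uniformly across all boundary $G_w$-orbits, and verifying that they vanish precisely on $G_w \cdot o$, will complete the argument.
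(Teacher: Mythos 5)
First, a structural point: the paper does not prove this proposition at all --- it is quoted from Brion--Polo \cite{MR1703350} and used as an external input --- so there is no in-paper argument to measure yours against. Judged on its own terms, your proposal handles the easy inclusion correctly: the identification $N_w\cdot o\simeq \fn_w$ (injectivity of the orbit map from $\fn_w\cap\fp=0$, and $N_w\simeq\fn_w$ as varieties because $\fg_{-1}$ is abelian) exhibits a smooth open neighborhood of $o$ of dimension $|\Delta(w)|$, so $o\in Y_w^0$, and $G_w$-equivariance gives $G_w\cdot o\subseteq Y_w^0$. Your reduction of the converse to $T$-fixed points is also legitimate: $\tSing(Y_w)$ is closed and $G_w$-stable, $G_w$ contains the Cartan $H$, and each of the finitely many $G_w$-orbits on $X$ (recall $G_w$ is parabolic) contains an $H$-fixed point, so singularity at one fixed point propagates to its whole orbit.

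The genuine gap is that the inclusion $Y_w^0\subseteq G_w\cdot o$ --- the entire content of the theorem --- is not proved. Your assertion that at each boundary $T$-fixed point ``the total count of these root directions should strictly exceed $\dim Y_w$'' is precisely the statement to be established, and the mechanism you sketch (``surplus directions coming from $\fg_{-1,\ell}$-components with $\ell>\sfa(w)$'' forced by a ``failure of the radical to act transversally'') is a heuristic, not an argument. To carry this out one needs an actual description of the Zariski tangent space $T_{e_v}Y_w$ at a fixed point $e_v$ with $v\le w$ --- for minuscule/cominuscule $G/P$ this is the Lakshmibai--Polo-type result expressing it as the span of the root spaces $\fg_{-\alpha}$ with $vs_\alpha\le w$ --- and one must then show that the number of such roots exceeds $|w|$ exactly when $e_v\notin G_w\cdot o$. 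That combinatorial/representation-theoretic verification is the hard core of Brion--Polo's theorem and is entirely absent here; as written, the proposal is an outline whose decisive step is left as a conjecture.
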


\noindent In this section we will apply Proposition \ref{P:aJ} to obtain a description of the stabilizer in terms of the the data $(\sfa(w),\ttJ(w))$.  Review the definitions of the grading elements $Z_\tti$ and $Z_w$, and the $(Z_\tti,Z_w)$--bigraded decomposition $\fg = \op\,\fg_{j,k}$, in Sections \ref{S:not} and \ref{S:aJ}.  

\begin{lemma} \label{L:gw}
The largest subalgebra $\fg_w \subset \fg$ containing $\fn_w$ and such that $\fg_w \equiv \fn_w$ mod $\fg_{\ge0}$ is 
\begin{equation} \label{E:gw}
  \fg_w \ = \ \fn_w \,\op\, \fg_{0,\ge0} \,\op\, \fg_{1 , \ge \sfa} \,.
\end{equation}
\end{lemma}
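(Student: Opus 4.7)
The plan is to verify that $\fl := \fn_w \oplus \fg_{0, \ge 0} \oplus \fg_{1, \ge \sfa}$ is a subalgebra satisfying the stated conditions, and then to show that every such subalgebra $\fg'$ is contained in $\fl$.

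For the first part, Proposition \ref{P:aJ} identifies $\fn_w = \fg_{-1, \ge -\sfa}$, so each summand of $\fl$ is a sum of bigraded pieces $\fg_{j, k}$ in the $(Z_\tti, Z_w)$--bigraded decomposition. Closure under the bracket then reduces to a finite check using $[\fg_{j, k}, \fg_{j', k'}] \subset \fg_{j+j', k+k'}$ and the abelianness of $\fg_{\pm 1}$ (Remark \ref{R:abelian}); for instance $[\fn_w, \fg_{1, \ge \sfa}] \subset \fg_{0, \ge 0}$ because $Z_w$--degrees sum to at least $0$. The congruence $\fl \equiv \fn_w \pmod{\fg_{\ge 0}}$ is immediate.

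For maximality, let $\fg' \supset \fn_w$ be a subalgebra with $\fg' \subset \fn_w + \fg_{\ge 0}$. Since $\fg_{-1} \cap \fg_{\ge 0} = 0$, necessarily $\fg' \cap \fg_{-1} = \fn_w$, so $\fg' = \fn_w \oplus V$ with $V = \fg' \cap \fg_{\ge 0}$. Given $z = z_0 + z_1 \in V$ (with $z_0 \in \fg_0$, $z_1 \in \fg_1$) and $v \in \fn_w$, the bracket $[v, z] \in \fg'$ decomposes as $[v, z_0] \in \fg_{-1}$ plus $[v, z_1] \in \fg_0$; the $\fg_{-1}$--component must lie in $\fn_w$, so $z_0$ normalizes $\fn_w$, and the identification $N_{\fg_0}(\fn_w) = \fq_w = \fg_{0, \ge 0}$ from the discussion around \eqref{E:gkl} gives $z_0 \in \fg_{0, \ge 0}$. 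Applying this to pure--$\fg_0$ elements of $\fg'$ yields $\fg' \cap \fg_0 \subset \fg_{0, \ge 0}$, and hence $[v, z_1] \in \fg_{0, \ge 0}$ for every $v \in \fn_w$.

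Decomposing $z_1 = \sum_k z_{1, k}$ in the $Z_w$--grading and extracting graded pieces of $[\fg_{-1, \ell}, z_1] \subset \fg_{0, \ge 0}$ forces $[z_{1, k}, \fg_{-1, \ell}] = 0$ whenever $\ell + k < 0$ and $\ell \in [-\sfa, 0]$; in particular $[z_{1, k}, \fg_{-1, -\sfa}] = 0$ for every $k < \sfa$. The pieces $\fg_{1, k}$ with $k < 0$ already vanish by Killing duality with $\fg_{-1, -k} = 0$, leaving the range $0 \le k < \sfa$. The main obstacle is to conclude $z_{1, k} = 0$ in this range: from the identity $B([z_{1, k}, w], y) = B(z_{1, k}, [w, y])$ with $w \in \fg_{-1, -\sfa}$ and $y \in \fg_{0, \sfa - k}$, the vanishing $[z_{1, k}, \fg_{-1, -\sfa}] = 0$ gives $B(z_{1, k}, [\fg_{-1, -\sfa}, \fg_{0, \sfa - k}]) = 0$. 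Combined with the structural fact that the raising map $\fg_{0, \sfa - k} \otimes \fg_{-1, -\sfa} \to \fg_{-1, -k}$ is surjective (a property of cominuscule $\fg_0$--modules, reflecting that $\fn_w$ is generated from its bottom $Z_w$--weight space by $\fg_{0, \ge 0}$) and the nondegenerate Killing pairing between $\fg_{1, k}$ and $\fg_{-1, -k}$, this forces $z_{1, k} = 0$. Verifying the surjectivity of the raising map is the technical heart of the argument, and is expected to be the most delicate step.
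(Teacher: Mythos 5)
Your plan --- verify the right-hand side is a subalgebra, then show that any subalgebra $\fg'$ with $\fn_w \subset \fg' \subset \fn_w + \fg_{\ge 0}$ lies inside it --- is sound, and your handling of the $\fg_0$--component via the identification of $\fq_w = \fg_{0,\ge0}$ with the stabilizer of $\fn_w$ in $\fg_0$ matches the paper's first reduction. The genuine gap is the asserted surjectivity of the single bracket map $\fg_{0,\sfa-k}\otimes\fg_{-1,-\sfa}\to\fg_{-1,-k}$. The justification you offer (that $\fn_w$ is generated from its bottom $Z_w$--weight space under $\fg_{0,\ge0}$) only yields the iterated statement that $\fg_{-1,-k}$ is reached from $\fg_{-1,-\sfa}$ by applying $\fg_{0,1}$ successively $\sfa-k$ times; it does not give that a single application of the higher graded piece $\fg_{0,\sfa-k}$ already surjects, since for $m\ge 2$ the image of $\fg_{0,m}$ under the adjoint action is in general a proper subspace of the $m$--fold iterated image of $\fg_{0,1}$. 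So the claimed surjectivity does not follow from the generation property and is, as you yourself flag, left unproved --- that is a real hole.

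The argument can be repaired from facts you have already derived. Your degree bookkeeping actually gives $[z_{1,k},\fg_{-1,\ell}]=0$ for \emph{every} $\ell$ in the range $-\sfa\le\ell\le -k-1$, not only $\ell=-\sfa$. Specializing to $\ell=-k-1$ and using only the one--step surjectivity $[\fg_{0,1},\fg_{-1,-k-1}]=\fg_{-1,-k}$ (which \emph{does} follow from the generation property, since $\fg_{-1}$ is an irreducible $\fg_0$--module generated by its lowest $Z_w$--graded piece under iterated $\fg_{0,1}$--action), Killing invariance $B(z_{1,k},[y,v])=B([v,z_{1,k}],y)=0$ for $y\in\fg_{0,1}$, $v\in\fg_{-1,-k-1}$ shows $z_{1,k}$ pairs trivially with all of $\fg_{-1,-k}$, hence vanishes. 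By contrast, the paper sidesteps the issue: it first reduces to a nonzero element of $\fg_{1,\sfa-1}$ (using irreducibility of $\fg_1$ and $\fg_{0,\ge0}\oplus\fg_{1,\ge\sfa}\subset\fg_w$), then runs a root string from the highest root of $\fg$ to show that every highest $\fg_{0,0}$--weight $\c$ of the putative kernel in $\fg_{1,\sfa-1}$ admits a simple $\s\in\Delta(\fg_{0,1})$ with $\c+\s\in\Delta(\fg_{1,\sfa})$, forcing $[\fg_\c,\fg_{-1,-\sfa}]\ne 0$. Both repaired routes ultimately rest on the same one--step raising structure; the paper's phrasing avoids the surjectivity pitfall you ran into.
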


\noindent It is well-known that the stabilizer of the Schubert variety $Y_w$ is parabolic; see, for example, \cite{MR1782635} or \cite{MR1703350}.   The following is a corollary of Lemma \ref{L:gw}.

\begin{corollary} \label{C:stab}
The stabilizer of $Y_w$ in $G$ is the parabolic subgroup $G_w$ associated with $\fg_w$.
\end{corollary}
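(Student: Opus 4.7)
The plan is to establish both inclusions between $S := \tStab(Y_w)$ and $G_w$, exploiting the cited fact that $S$ is parabolic (and hence connected) together with the maximality property of $\fg_w$ supplied by Lemma \ref{L:gw}. Write $\fs$ for the Lie algebra of $S$.

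First I would place $\fn_w$ inside $\fs$: because $Y_w = \overline{N_w\cdot o}$ and $N_w$ is a group, $N_w\cdot Y_w \subset Y_w$, so $N_w \subset S$ and therefore $\fn_w \subset \fs$. Next I would identify the tangent space $T_oY_w$ with $\fn_w \subset T_oX \cong \fg_{-1}$. This uses the fact, guaranteed by Remark \ref{R:abelian}, that $\texp(\fg_{-1})$ acts freely on an affine neighbourhood of $o$, so its subgroup $N_w$ also acts freely and the orbit $N_w\cdot o$ is an open smooth subset of $Y_w$ isomorphic to $N_w$. The differential at $o$ of the $S$-action on $X$ factors through $\fg/\fp \cong \fg_{-1}$ and must land in $T_oY_w = \fn_w$, so $\fs \subset \fn_w + \fp$. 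Combined with $\fn_w \subset \fs$, the maximality statement of Lemma \ref{L:gw} forces $\fs \subset \fg_w$.

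For the reverse inclusion I would compute the orbit $G_w\cdot o$ directly. By Lemma \ref{L:gw}, the image of $\fg_w$ in $\fg/\fp \cong \fg_{-1}$ is precisely $\fn_w$, so $T_o(G_w\cdot o) = \fn_w$ and $\dim(G_w\cdot o) = \dim\fn_w = \dim Y_w$. The orbit contains $N_w\cdot o$, so its closure contains $Y_w$; as both are irreducible of the same dimension, $\overline{G_w\cdot o} = Y_w$. This gives $G_w\cdot Y_w \subset Y_w$, so $G_w \subset S$. Combining with $\fs \subset \fg_w$ and the connectedness of the parabolic $S$, we conclude $S = G_w$.

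The main obstacle I expect is the infinitesimal step identifying $T_oY_w$ with $\fn_w$ and verifying that $\fs$ reduces to $\fn_w$ modulo $\fp$; once that bookkeeping is in place, the rest is a clean interplay between the maximality of $\fg_w$ in Lemma \ref{L:gw} and an orbit-dimension count.
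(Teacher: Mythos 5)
Your proof is correct, and it fleshes out exactly the argument the paper leaves implicit when calling the statement a ``corollary of Lemma \ref{L:gw}'': one inclusion from $\fn_w\subset\fs$ together with the tangent--space computation $T_oY_w=\fn_w$ (so $\fs\equiv\fn_w$ mod $\fg_{\ge0}$) and the maximality in Lemma \ref{L:gw}, the other from the orbit--dimension count showing $\overline{G_w\cdot o}=Y_w$, and finally connectedness of the parabolic stabilizer. No gaps; this is the intended route.
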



\begin{proof}[Proof of Lemma \ref{L:gw}]
Let $\fg'$ denote the right-hand side of \eqref{E:gw}.  It is clear that $\fg'$ is a subalgebra of $\fg$, and that $\fg' \equiv \fn_w$ mod $\fg_{\ge0}$.  So $\fg'\subset\fg_w$.  By construction $\fg_{0,\ge0}$ is the stabilizer of $\fn_w$ in $\fg_0$, see Section \ref{S:aJ}.  Consequently, $\fg_w \equiv \fn_w \op \fg_{0,\ge0}$ mod $\fg_1$.  So it remains to see that,
\begin{equation} \label{E:gwprf}
  \fg_{1,<\sfa}\,\cap\, \fg_w \ = \ 0 \,.
\end{equation}

Assume the converse: suppose there exists a nonzero a nonzero $\z\in\fg_{1,<\sfa}\,\cap\, \fg_w$.  Since $\fg_1$ is an irreducible $\fg_0$--module, and $\fg_{0,\ge0} \op\fg_{1,\ge\sfa}\subset \fg_w$, we may assume without loss of generality that $\z\in\fg_{1,\sfa-1}$.  Then $[\z,\fn_w] \subset \fg_w$ holds if and only if $[\z,\fg_{-1,-\sfa}] = 0$.  In particular, $U = \{ \z \in \fg_{1,\sfa-1} \ | \ [\z,\fg_{-1,-\sfa}] = 0 \}\not=0$.  

The Jacobi identity implies $U$ is a $\fg_{0,0}$--module.  So there exists $\Delta(U) \subset \Delta(\fg_{1,\sfa-1})$ such that $U = \oplus_{\a\in\Delta(U)} \fg_\a$.  Let $\c \in \Delta(U)$ be a highest $\fg_{0,0}$--weight.  Let $\tilde \a$ be the highest root of $\fg$.  Then there exists a sequence $\s_1,\ldots,\s_\ell \in \Sigma$ of simple roots such that $\c_i := \tilde \c - \s_1 - \cdots - \s_i$ is a root, for each $1 \le i \le \ell$, and $\c_\ell = \c$.  Since $\c$ is a highest $\fg_{0,0}$--weight, and $\c_\ell + \s_\ell = \c_{\ell-1}$ is a root, $\s_\ell$ must lie in $\Delta(\fg_{0,1})$.  In particular, $\c_{\ell-1} \in \Delta(\fg_{1,\sfa})$.  From $\c - \c_{\ell-1} = -\s_\ell$, we see that $0 \not= \fg_{-\s_\ell} = [\fg_\c , \fg_{-\c_{\ell-1}}] \subset [\fg_\c , \fg_{-1,-\sfa}]$, implying $\fg_\c \not\subset U$, a contradiction.  We conclude that \eqref{E:gwprf} must hold.
\end{proof}

The algebra $\fg_w$ is a nonstandard parabolic; that is, $\fg_w$ does not contain the fixed Borel subalgebra $\fb\subset\fg$.  
Nonetheless, it admits a useful description in terms of the element 
\begin{subequations} \label{SE:tilde}
\begin{equation} \label{E:tZw}
  \tilde Z_w \ = \ Z_w - \sfa Z_\tti \ \in \ \fh \,.
\end{equation}
As an element of the Cartan subalgebra, $\tilde Z_w$ acts on $\fg$ by eigenvalues.  Set $\sft = \tmax\{ \a(\tilde Z_w) \ | \ \a\in\Delta\}$.  The \emph{$\tilde Z_w$--graded decomposition} of $\fg$ is the eigenspace decomposition
\begin{equation} \label{E:tg}
  \fg \ = \ \bigoplus_{s=-\sft}^\sft \, \tilde \fg_s \, \quad \hbox{where}\quad
  \tfg_s \, := \, \{ \z \in \fg \, | \, [\tilde Z_w \,,\, \z ] = s \z \} \,.
\end{equation}
Given \eqref{E:gw}, it is straight-forward to confirm that 
\begin{equation} \label{E:gwP}
  \fg_w = \tilde\fg_{\ge0} \, .
\end{equation} 
\end{subequations}
(Note that this provides a second proof that $\fg_w$ is parabolic, cf. \cite[Theorem 3.2.1(2)]{MR2532439}.)  

\begin{remark} \label{R:Gw}
We see from \eqref{SE:tilde} that the pair $(\sfa,\ttJ)$ encodes the relationship between the two parabolic subalgebras $\fp$ and $\fg_w$.  Moreover, \eqref{E:gkl} and \eqref{SE:tilde} yield $\fg_{0,0} = \fg_0 \,\cap\,\tilde\fg_0$ and $\fg_{1,\sfa-1} = \fg_1 \,\cap\,\tilde\fg_{-1}$.  So Corollary \ref{C:|sing|} may be interpreted as saying the following: to the pair of parabolic subalgebras $\fp = \fg_{\ge0}$ and $\fg_w = \tilde\fg_{\ge0}$ is naturally associated a reductive subalgebra $\fr = \fg_0 \cap \tilde\fg_0$ and a $\fr$--module $U = \fg_1 \cap \tilde\fg_{-1}$, such that the irreducible components of the singular locus of $Y_w$ are in bijection with the irreducible $\fr$--submodules of $U$.
\end{remark}


\subsection{Lemmas} \label{S:lemmas}
We begin with a proof of Lemma \ref{L:we}.  The remainder of the section is then devoted to the proof of Theorem \ref{T:sing}; the theorem is an immediate corollary of Lemmas \ref{L:wemax} and \ref{L:singXw}.

\begin{remark} \label{R:Dw}
One important consequence of Remark \ref{R:abelian} is that given a set $\Phi \subset \Delta(\fg_1)$, there exists $w \in W^\fp$ such that $\Delta(w) = \Phi$ if and only if $\Delta^+ \backslash \Phi$ is closed.  For details see \cite[Section 2.3]{MR2960030}.
\end{remark}

\begin{proof}[Proof of Lemma \ref{L:we}]
By Remark \ref{R:abelian} it suffices to show that  
\begin{eqnarray*}
  \Phi(w,\e) \ := \ 
  \Delta^+ \backslash \{ \Delta(w) \backslash \Delta(w,\e) \} & = & 
  \{ \Delta^+ \backslash \Delta(w) \} \ \sqcup \ \Delta(w,\e) \\
  & = & \Delta(\fg_{1,>\sfa}) \ \sqcup \ \Delta^+(\fg_{0,\ge0}) \ 
  \sqcup \ \Delta(w,\e) 
\end{eqnarray*}
is closed.  By Remark \ref{R:abelian}, the set $\Delta(\fg_{1,>\sfa}) \sqcup \Delta(w,\e) \subset \Delta(\fg_1)$ is closed.  Similarly, by Remark \ref{R:Dw}, the set $\Delta^+\backslash\Delta(w) = \Delta(\fg_{1,>\sfa}) \sqcup \Delta^+(\fg_{0,\ge0})$ is closed.  So it remains to show that given roots $\n\in\Delta(w,\e)$ and $\b \in \Delta^+(\fg_{0,\ge0})$ such that $\n+\b$ is also a root, it is the case that $\n+\b \in \Phi(w,\e)$.  There are two cases to consider: either $\n = \e$, or $\n = \e + \n'$ for some $\n' \in \Delta(\fg_{0,1})$. \smallskip

\noindent {\small {\bf (I)}} Assume $\n=\e$ and $\n+\b\in\Delta$.  
\begin{circlist}
\item
If $\b \in \Delta(\fg_{0,>1})$, then $\n+\b \in \Delta(\fg_{1,>\sfa}) \subset \Phi(w,\e)$.  
\item
If $\b \in \Delta(\fg_{0,1})$, then $\n+\b\in\Delta(w,\e) \subset \Phi(w,\e)$.  \item
If $\b \in \Delta^+(\fg_{0,0})$, then $\n+\b = \e+\b$ cannot be a root because $\e$ is a highest $\fg_{0,0}$--weight.
\end{circlist}

\noindent {\small {\bf (II)}} Assume $\n=\e+\n' \in\Delta(\fg_{1,\sfa})$ and $\n+\b\in\Delta$.  
\begin{circlist}
\item
If $\b \in \Delta(\fg_{0,>0})$, then $\n+\b \in \Delta(\fg_{1,>\sfa}) \subset \Phi(w,\e)$.  
\item 
If $\b \in \Delta^+(\fg_{0,0})$, then $\n+\b = \e+\n'+\b \in \Delta$; therefore,
\begin{eqnarray*}
  \{0\} \ \not= \ \fg_{\n+\b} & = & \left[ \fg_\n \,,\, \fg_\b \right] 
  \ = \ \left[ [\fg_\e \,,\, \fg_{\n'} ] \,,\, \fg_\b \right] \\
  & = & \left[ [\fg_\b \,,\, \fg_{\n'} ] \,,\, \fg_\e \right] \ + \ 
  \left[ [\fg_\e \,,\, \fg_\b ] \,,\, \fg_{\n'} \right] \, .
\end{eqnarray*} 
Because $\e$ is a highest $\fg_{0,0}$--weight, the bracket $[\fg_\e \,,\, \fg_\b ]$ is zero.  This forces $[\fg_\b \,,\, \fg_{\n'} ]$ to be nonzero.  Equivalently, $\b+\n' \in \Delta(\fg_{0,1})$.  Thus $\n+\b = \e + (\n'+\b) \in \Delta(w,\e)$.
\end{circlist}
\end{proof}

\begin{lemma} \label{L:wemax}
The set $\{ \Delta(w_\e) \ | \ \e \in \Pi_{1,\sfa-1} \}$ is precisely the collection of $\Delta(w_1) \subset \Delta(w)$, with $w_1 \in W^\fp$, that are maximal with the property that 
\begin{equation} \label{E:wemax}
\Delta(w) \backslash \Delta(w_1) \not\subset \Delta(\fg_{1,\sfa}) \, .
\end{equation}
\end{lemma}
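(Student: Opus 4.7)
The plan is to establish the bijection in two directions. Throughout I will use the following reformulation, stemming from Remarks \ref{R:abelian} and \ref{R:Dw}: a subset $\Delta(w_1) \subset \Delta(w)$ is the inversion set of some $w_1 \in W^\fp$ if and only if $S := \Delta(w) \setminus \Delta(w_1)$ has the closure property
\[
(\ast)\quad \a \in S,\ \b \in \Delta^+(\fg_{0, \ge 0}),\ \a + \b \in \Delta(w) \ \Longrightarrow\ \a + \b \in S,
\]
since $\Delta^+ \setminus \Delta(w_1) = (\Delta^+\setminus \Delta(w)) \sqcup S$, the first summand is already closed, and $\fg_1$ is abelian. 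The singular-locus hypothesis translates to
\[
(\star)\quad \hbox{$S$ contains some root of $Z_w$--weight strictly less than $\sfa$.}
\]

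For the forward direction, Lemma \ref{L:we} gives $\Delta(w_\e) \subset \Delta(w)$ with $\Delta(w)\setminus\Delta(w_\e) = \Delta(w,\e) \ni \e$ of $Z_w$-weight $\sfa-1$, so $(\star)$ holds. To prove maximality, I consider any proper enlargement $\Delta(w_\e) \subsetneq \Delta(w_1) \subset \Delta(w)$; its complement $S' \subsetneq \Delta(w,\e)$ falls into one of two cases. If $\e \notin S'$, then $S' \subset \Delta(w,\e)\setminus\{\e\}\subset\Delta(\fg_{1,\sfa})$ and $(\star)$ fails. If $\e \in S'$ but some $\n \in \Delta(w,\e)\setminus\{\e\}$ is missing from $S'$, then $\e$ and $\n - \e \in \Delta(\fg_{0,1})$ both lie in $\Delta^+\setminus\Delta(w_1)$ while $\n = \e + (\n - \e) \in \Delta(w_1)$, so $(\ast)$ fails and $w_1 \notin W^\fp$.

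For the converse, let $\Delta(w_1)$ be maximal with the stated property; equivalently, $S$ is minimal satisfying $(\ast)$ and $(\star)$. The key step is producing $\e \in S \cap \Pi_{1,\sfa-1}$, in two substeps. First, start with any $\e_0 \in S$ of weight $\ell_0 < \sfa$: if $\e_0$ is not a $\fg_{0,0}$--highest weight, add any $\b \in \Delta^+(\fg_{0,0})$ with $\e_0 + \b \in \Delta(\fg_{1,\ell_0})$ a root; by $(\ast)$ the result lies in $S$, and iterating yields $\tilde\e \in S\cap \Pi_{1,\ell}$ for some $\ell < \sfa$. Second, if $\ell < \sfa - 1$, promote $\tilde\e$ to higher $Z_w$-weight: since $w \ne w_0$ forces $\sfa < \tmax\{\a(Z_w): \a \in \Delta(\fg_1)\}$, the root $\tilde\e$ is not the highest root of $\fg_1$, so the standard chain argument supplies a simple root $\s$ with $\tilde\e + \s$ a root. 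Since $\fg_2 = 0$ rules out $\s = \a_\tti$, and the highest-weight hypothesis on $\tilde\e$ rules out $\s \in \Delta^+(\fg_{0,0})$, the only remaining option is $\s \in \Delta(\fg_{0,1})$. Then $\tilde\e + \s \in \Delta(\fg_{1,\ell+1}) \subset \Delta(w)$ lies in $S$ by $(\ast)$; applying the first substep to $\tilde\e + \s$ produces an element of $S \cap \Pi_{1,\ell+1}$, and iterating yields $\e \in S \cap \Pi_{1,\sfa-1}$.

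With $\e$ in hand, the Jacobi computation from the proof of Lemma \ref{L:we} (with $(\ast)$ replacing the closure of $\Delta^+\setminus\Delta(w_\e)$) shows $\Delta(w,\e) \subset S$: each $\n \in \Delta(w,\e)\setminus\{\e\}$ is $\e + (\n - \e)$ with $\n - \e \in \Delta(\fg_{0,1})$. Since $\Delta(w,\e)$ itself satisfies $(\ast)$ and $(\star)$ (by Lemma \ref{L:we} and containing $\e$), minimality of $S$ forces $S = \Delta(w,\e)$, i.e., $\Delta(w_1) = \Delta(w_\e)$. The main obstacle is the weight-raising substep in the converse: it hinges on simultaneously exploiting the chain property in $\Delta$, cominuscularity ($\fg_2 = 0$), and the $\fg_{0,0}$--highest-weight condition to force $\s \in \Delta(\fg_{0,1})$; once these three ingredients are aligned the step is essentially forced.
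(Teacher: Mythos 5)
Your proof is correct and follows essentially the same strategy as the paper: both directions rest on the closure criterion for inversion sets (Remark \ref{R:Dw}) together with chains of simple roots climbing from a low-$Z_w$-weight element of the complement to a $\fg_{0,0}$--highest weight $\e\in\Pi_{1,\sfa-1}$, after which $\Delta(w,\e)\subset S$ follows from closedness. The only (cosmetic) difference is in the converse: the paper runs a single chain up to the highest root of $\fg$ and then moves within the weight-$(\sfa-1)$ level to a $\fg_{0,0}$--highest weight, whereas you interleave ``pass to the highest weight at the current level'' with ``step up one level via a simple root in $\ttJ$,'' using $\fg_2=0$ to force that simple root into $\Delta(\fg_{0,1})$.
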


\begin{lemma}\label{L:singXw}
Given $w_1 < w$, we have $X_{w_1} \subset \tSing(X_w)$ if and only if $\Delta(w) \backslash \Delta(w_1) \not\subset \Delta(\fg_{1,\sfa})$.
\end{lemma}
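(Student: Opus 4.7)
The plan is to convert $X_{w_1}\subset\tSing(X_w)$ into a pointwise smoothness question and then apply the Brion--Polo characterization (Proposition \ref{P:BP}) together with the bigraded description of $\fg_w$ from Section \ref{S:stab}.

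First I would reduce to a single $T$-fixed point. Since $X_w$ is $B$-invariant, so is its smooth locus; moreover the open dense stratum $C_{w_1} := B\cdot w_1^{-1}o$ of $X_{w_1}$ is a single $B$-orbit, so every point of $C_{w_1}$ is simultaneously smooth or singular in $X_w$. Combined with the closedness of $\tSing(X_w)$, this yields $X_{w_1}\subset\tSing(X_w)$ if and only if the torus-fixed point $w_1^{-1}o$ is a singular point of $X_w$. Via the $G$-equivariant isomorphism $x\mapsto wx$ from $X_w$ onto $Y_w$, this is equivalent to $p := ww_1^{-1}o\in Y_w$ being a singular point of $Y_w$, and by Brion--Polo this is equivalent to $p\notin G_w\cdot o$.

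Next I would translate the condition $p \in G_w\cdot o$ into a condition on roots. The parabolic structure $\fg_w = \tilde\fg_{\ge 0}$ from \eqref{E:gwP}, combined with the explicit form $\fg_w = \fn_w\oplus\fg_{0,\ge0}\oplus\fg_{1,\ge\sfa}$ of Lemma \ref{L:gw}, identifies the Levi $L_w = \tilde\fg_0 = \fg_{0,0}\oplus\fg_{1,\sfa}\oplus\fg_{-1,-\sfa}$ and the unipotent radical $\tilde\fg_{>0}$ of $G_w$. Using this, I would show that a $T$-fixed point $vo\in Y_w$ lies in $G_w\cdot o$ if and only if the element $v$ can be represented, modulo $W_\fp$, so as to permute only root directions that are $\tilde Z_w$-nonnegative. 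Applied to $v = ww_1^{-1}$, this translates into the requirement that every $\alpha \in \Delta(w)\setminus\Delta(w_1)$ satisfies $\alpha(\tilde Z_w)\ge 0$, equivalently $\alpha(Z_w)\ge\sfa$, equivalently $\alpha\in\Delta(\fg_{1,\sfa})$ (since $\Delta(w)\subset\Delta(\fg_{1,\le\sfa})$ by Proposition \ref{P:aJ}).

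The main obstacle is making this last step rigorous: precisely matching the group-theoretic condition $p\in G_w\cdot o$ with the inversion-set condition $\Delta(w)\setminus\Delta(w_1)\subset\Delta(\fg_{1,\sfa})$. The most direct route exploits the cominuscule hypothesis that $\fg_{-1}$ is abelian (Remark \ref{R:abelian}), which renders the passage between $\fn_{w_1}$ and $\fn_w$ additive and indexed by $\Delta(w)\setminus\Delta(w_1)$. For one direction, if every such $\alpha$ lies in $\Delta(\fg_{1,\sfa})$, one exhibits an element of $L_w\subset G_w$ carrying $o$ to $p$ by moving along the $\fg_{1,\sfa}$ factor of the Levi. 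For the converse, the presence of even a single $\alpha\in\Delta(w)\setminus\Delta(w_1)$ with $\alpha(Z_w)<\sfa$ forces $p$ into a $G_w$-orbit of strictly smaller dimension than $G_w\cdot o = Y_w^0$, so $p$ is singular.
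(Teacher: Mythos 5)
Your overall strategy coincides with the paper's: reduce to the question of whether a distinguished point of $X_{w_1}$ lies in the Brion--Polo smooth locus $G_w\cdot o$ of $Y_w$ (Proposition \ref{P:BP}), and then decide that using the bigraded description $\fg_w=\fn_w\oplus\fg_{0,\ge0}\oplus\fg_{1,\ge\sfa}=\tilde\fg_{\ge0}$. Your reduction to the single $T$-fixed point $w_1^{-1}o$ via $B$-invariance of $\tSing(X_w)$ is fine, and your easy direction (when $\Delta(w)\setminus\Delta(w_1)\subset\Delta(\fg_{1,\sfa})$, move $o$ to $p$ inside $G_w$) matches the paper's, which realizes this by showing $r_\gamma\in G_w$ for every $\gamma\in\Delta(\fg_{1,\sfa})$.

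The gap is in the converse, which is where essentially all the work in this lemma lies. Your justification --- that a single $\alpha\in\Delta(w)\setminus\Delta(w_1)$ with $\alpha(Z_w)<\sfa$ ``forces $p$ into a $G_w$-orbit of strictly smaller dimension'' --- is a restatement of the conclusion $p\notin G_w\cdot o$, not an argument for it; since $G_w\cdot o$ is the open orbit, every other orbit automatically has smaller dimension, so nothing is gained by phrasing it this way. Likewise the asserted principle that a $T$-fixed point $v\cdot o$ lies in $G_w\cdot o$ iff $v$ ``permutes only $\tilde Z_w$-nonnegative root directions modulo $W_\fp$'' is unproven, and it is not routine: $G_w$ is a non-standard parabolic (it does not contain $\fb$), and the subtlety is that $v=ww_1^{-1}$ is a product of many reflections, some of whose factors do lie in $G_w$. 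The paper closes this gap in two steps you would need to supply: (i) Lemma \ref{L:wemax} together with Corollary \ref{C:we} reduces the problem to a single highest-weight root $\e\in\Pi_{1,\sfa-1}$ and the single reflection $r_\e$, after peeling off the reflections $r_{\n_\ell}$ with $\n_\ell\in\Delta(\fg_{1,\sfa})$ which lie in $G_w$; and (ii) a factorization argument shows $r_\e\notin G_wP$: writing a hypothetical representative as $\tilde pp$ with $\tilde p=\texp(\tilde u)\tilde g\in G_w$ and $p=\texp(u)g\in P$, the requirement that $\tAd_{\tilde pp}$ preserve $\fh$ forces (by downward induction on the $\tilde Z_w$-grading) $\tAd_p\fh\subset\tilde\fg_{\ge0}$, hence $\tilde pp\in G_w$, which is impossible because $\tAd_{r_\e}\fg_{-\e}=\fg_\e\subset\tilde\fg_{-1}$ while every element of $G_w$ maps $\fg_{-\e}\subset\tilde\fg_1$ into $\tilde\fg_{\ge1}$. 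Without some version of (i) and (ii), the ``only if'' direction of the lemma is not established.
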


Given a root $\c$, let $r_\c \in W$ denote the associated reflection.  In order to prove Lemmas \ref{L:wemax} and \ref{L:singXw} we first recall

\begin{lemma} \label{L:w'w}
Let $w_1 , w \in W^\fp$ be elements of the Hasse diagram of a cominuscule $G/P$.  Then $w_1 \le w$ if and only if $\Delta(w_1) \subset \Delta(w)$.  In this case there is an ordering $\{ \c_1 \, , \, \c_2 \, , \ldots , \, \c_m \}$ of the elements of $\Delta(w) \backslash \Delta(w_1)$ so that $w_{\ell+1} = r_{\c_\ell} w_{\ell} \in W^\fp$, $w_{m+1} = w$ and $\Delta(w_{\ell+1}) = \Delta(w_1) \sqcup\{\c_1,\ldots,\c_\ell\}$.
\end{lemma}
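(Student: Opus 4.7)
The plan is to derive both claims from the standard fact, valid in the cominuscule setting, that $w \mapsto \Delta(w)$ is an order-preserving bijection from $W^\fp$ (with Bruhat order) onto the lattice of order ideals of $(\Delta(\fg_1),\preceq)$, where $\a \preceq \b$ means $\b-\a$ is a nonnegative integer combination of simple roots of $\fg_0$.  This bijection rests on the cominuscule fact that $\fg_1$ is abelian (Remark \ref{R:abelian}), and is classical in the minuscule poset literature; I would cite it from the references already used in \cite{MR2960030}.

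For the first assertion, the forward direction $w_1 \le w \Rightarrow \Delta(w_1) \subset \Delta(w)$ is the usual monotonicity of inversion sets along Bruhat order, valid in any Weyl group.  The converse uses the bijection: any inclusion of order ideals $\Delta(w_1) \subset \Delta(w)$ refines to a maximal chain of order ideals obtained by successively removing maximal elements, and under the bijection this translates to a saturated Bruhat chain in $W^\fp$ joining $w_1$ to $w$.

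For the chain assertion I would induct on $m := |\Delta(w)\setminus\Delta(w_1)|$, the base case $m=0$ being trivial.  In the inductive step choose $\c_1$ to be any $\preceq$-minimal element of $\Delta(w)\setminus\Delta(w_1)$.  Minimality ensures that $\Delta(w_1)\sqcup\{\c_1\}$ is still an order ideal, so the bijection delivers a unique $w_2 \in W^\fp$ with $\Delta(w_2) = \Delta(w_1)\sqcup\{\c_1\}$ and $\ell(w_2) = \ell(w_1)+1$.  The Bruhat cover $w_1 \lessdot w_2$ inside $W^\fp$ is realized --- as is standard in the cominuscule case --- by right multiplication by a simple reflection: $w_2 = w_1 s_j$ with $w_1(\a_j) > 0$.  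A direct inversion-set computation gives $\Delta(w_1 s_j) = \Delta(w_1) \cup \{w_1(\a_j)\}$, so $\c_1 = w_1(\a_j)$.  Consequently $r_{\c_1} = w_1 s_j w_1^{-1}$, and therefore $r_{\c_1} w_1 = w_1 s_j = w_2$.  Applying the induction hypothesis to the pair $(w_2, w)$ produces $\c_2,\ldots,\c_m$ and completes the chain.

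The substantive content is the minuscule-poset bijection together with the fact that Bruhat covers in $W^\fp$ are realized by simple reflections on the right.  The trickiest step to pin down carefully is that the $\preceq$-minimal choice of $\c_1$ really does produce an order ideal $\Delta(w_1)\sqcup\{\c_1\}$; this uses crucially that $\fg_1$ is an abelian nilradical, so that its combinatorial structure is captured entirely by the poset $(\Delta(\fg_1),\preceq)$ and not by more intricate bracket relations. Once that point is secured the induction is formal, and the identification $w_{\ell+1} = r_{\c_\ell} w_\ell$ follows immediately from the right-multiplication rule above.
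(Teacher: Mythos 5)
Your proposal reaches the lemma by a genuinely different route from the paper, which offers no argument at all beyond citing Propositions 3.2.12(5) and 3.2.15(3) of \CS{} \cite{MR2532439} and noting the translation $\Phi_w = \Delta(w)$. You instead invoke the minuscule-poset picture --- $w \mapsto \Delta(w)$ identifies $(W^\fp,\le)$ with the lattice of lower order ideals of $(\Delta(\fg_1),\preceq)$ --- and then run an induction that peels off a $\preceq$-minimal element of $\Delta(w)\backslash\Delta(w_1)$ at each step. The two computational points you supply, namely that $\Delta(w_1)\sqcup\{\c_1\}$ is again an order ideal when $\c_1$ is $\preceq$-minimal, and that $\Delta(w_1 s_j) = \Delta(w_1)\sqcup\{w_1(\a_j)\}$ together with $r_{w_1(\a_j)} = w_1 s_j w_1^{-1}$ gives $w_2 = r_{\c_1}w_1$, are both correct and are exactly the content the paper leaves implicit. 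What your version buys is an explicit construction of the chain; what it costs is reliance on two nontrivial standard inputs (the order-ideal isomorphism, and the fact that Bruhat covers in cominuscule $W^\fp$ are realized by right multiplication by simple reflections, i.e.\ that Bruhat and weak order coincide on $W^\fp$), which would need to be cited with the same care the paper devotes to \cite{MR2532439}.

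One justification you give is wrong as stated and should be removed: the implication $w_1 \le w \Rightarrow \Delta(w_1)\subset\Delta(w)$ is \emph{not} ``monotonicity of inversion sets along Bruhat order, valid in any Weyl group.'' In a general Weyl group, containment of inversion sets characterizes the weak order, which is strictly finer than the Bruhat order; for instance in type $A_2$ one has $s_1 \le s_2s_1$ in Bruhat order while $\Delta(s_1)=\{\a_1\}$ is not contained in $\Delta(s_2s_1)=\{\a_2,\a_1+\a_2\}$. The implication you need holds here only because of the cominuscule hypothesis --- equivalently, because the map to order ideals is an order \emph{isomorphism} (not merely order-preserving), which is precisely the standard fact you open with. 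Since that fact already delivers the forward direction, the step is recoverable, but the appeal to a general-Weyl-group principle is a genuine (if local) error in the justification.
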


\begin{proof}
This well-known result may be deduced from Propositions 3.2.12(5) and 3.2.15(3) of \cite{MR2532439}.  Note that their $\Phi_w$ is our $\Delta(w)$.
\end{proof}

\begin{corollary} \label{C:we}
The Weyl group element $w_1 = r_\e w_\e$ is an element of $W^\fp$ and $\Delta(r_\e w_\e) = \Delta(w_\e) \sqcup\{\e\}$.  Moreover, there exists an ordering $\{\n_1,\cdots,\n_m\}$ of the elements of $\Delta(w,\e)\backslash\{\e\}$ so that $w_{\ell+1} := r_{\n_\ell}\cdots r_{\n_1}w_1 \in W^\fp$ and $\Delta(w_{\ell+1}) = \Delta(w_\e) \sqcup \{ \e , \n_1 ,\ldots, \n_\ell \}$, for all $1 \le \ell \le m$.
\end{corollary}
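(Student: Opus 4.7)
The plan is to obtain the corollary by applying Lemma \ref{L:w'w} twice: first to produce $w_1$ from $w_\e$ by adjoining the single root $\e$, and then to produce $w$ from $w_1$ by adjoining the remaining roots of $\Delta(w,\e)\setminus\{\e\}$ in some order.

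\emph{Step 1.} First I show that $\Phi := \Delta^+ \setminus (\Delta(w_\e)\sqcup\{\e\})$ is closed. Granted this, Remark \ref{R:Dw} yields a unique $w_1 \in W^\fp$ with $\Delta(w_1) = \Delta(w_\e)\sqcup\{\e\}$, and Lemma \ref{L:w'w} applied to $w_\e \le w_1$ supplies only one intermediate root, forcing $w_1 = r_\e w_\e$. To verify closedness, observe that $\Phi = \Phi(w,\e)\setminus\{\e\}$, where $\Phi(w,\e) = \Delta(\fg_{1,>\sfa})\sqcup\Delta^+(\fg_{0,\ge 0})\sqcup\Delta(w,\e)$ is closed by the proof of Lemma \ref{L:we}. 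Hence the only possible failure of closedness is a pair $\alpha,\beta \in \Phi$ with $\alpha+\beta = \e$. Since $\fg_1$ is abelian (Remark \ref{R:abelian}) and $\alpha,\beta \in \Delta^+$, one of them, say $\alpha$, lies in $\Delta^+(\fg_{0,\ge 0})$, and then $\beta = \e - \alpha \in \Delta(\fg_1)$. Setting $k := \alpha(Z_w) \ge 0$, we have $\beta \in \Delta(\fg_{1,\sfa-1-k}) \subset \Delta(\fg_{1,\le\sfa-1})$. But $\Phi \cap \Delta(\fg_1) = \Delta(\fg_{1,>\sfa}) \sqcup (\Delta(w,\e)\setminus\{\e\}) \subset \Delta(\fg_{1,\ge\sfa})$, contradicting $\beta \in \Phi$.

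\emph{Step 2.} By Lemma \ref{L:we}, $\Delta(w_\e) = \Delta(w)\setminus\Delta(w,\e)$, and $\e \in \Delta(w,\e) \subset \Delta(w)$, so $\Delta(w_1) \subset \Delta(w)$, giving $w_1 \le w$ via Lemma \ref{L:w'w}. The same lemma then produces the desired ordering $\{\n_1,\ldots,\n_m\}$ of $\Delta(w)\setminus\Delta(w_1) = \Delta(w,\e)\setminus\{\e\}$ together with the chain $w_{\ell+1} = r_{\n_\ell}w_\ell \in W^\fp$ satisfying $\Delta(w_{\ell+1}) = \Delta(w_\e)\sqcup\{\e,\n_1,\ldots,\n_\ell\}$. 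The crux of the argument is the closedness check in Step 1; once it is in hand, the rest is a direct invocation of the preceding results.
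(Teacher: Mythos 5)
Your proof is correct and follows essentially the same route as the paper's: both rest on Lemma \ref{L:w'w} together with the closedness criterion of Remark \ref{R:Dw}, and both exploit the fact that every element of $\Delta(w,\e)\backslash\{\e\}$ has the form $\e+\m$ with $\m\in\Delta(\fg_{0,1})$. The only difference is organizational --- the paper takes the ordering of $\Delta(w,\e)$ supplied by Lemma \ref{L:w'w} and shows its first element $\c_1$ must be $\e$ (otherwise $\c_1=\e+\m$ with $\e,\m$ in the closed set $\Delta^+\backslash\Delta(r_{\c_1}w_\e)$ but $\c_1$ not in it), whereas you verify directly that $\Delta(w_\e)\sqcup\{\e\}$ is an inversion set via a $Z_w$--degree count; both checks are valid.
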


\begin{proof}
It suffices to observe that, in the ordering of the roots $\Delta(w,\e) = \Delta(w)\backslash\Delta(w_\e)$ given by Lemma \ref{L:w'w}, it is necessarily the case that $\c_1 = \e$.  By Remark \ref{R:Dw}, the set $\Phi = \Delta^+\backslash\Delta( r_{\c_1}w_\e)$ is closed.  Suppose that $\c_1 \not= \e$.  Then, by the definition \eqref{E:we} of $\Delta(w,\e)$, there exists $\m \in \Delta(\fg_{0,1})$ such that $\c_1 = \e+\m$.  However, $\e,\m\in\Phi$, while $\c_1 \not\in\Phi$, contradicting the closure of $\Phi$. 
\end{proof}



\begin{proof}[Proof of Lemma \ref{L:wemax}]
Recall (Proposition \ref{P:aJ}) that $\Delta(w) = \Delta(\fg_{1,\le\sfa})$.  By Lemma \ref{L:we}, $\Delta(w) \backslash\Delta(w_\e) = \Delta(w,\e)$.  The definition \eqref{E:we} yields $\Delta(w,\e) \cap \Delta(\fg_{1,<\sfa}) = \{\e\}$.  So $\Delta(w,\e) \not\subset \Delta(\fg_{1,\sfa})$.  To see that $\Delta(w_\e)$ is maximal with respect to \eqref{E:wemax}, recall (Remark \ref{R:Dw}) that $\Delta^+ \backslash\Delta(w_\e)$ is closed; this forces $\Delta(w,\e) \subset \Delta^+\backslash\Delta(w_\e)$.

Conversely, suppose that $\Delta(w_1) \subset \Delta(w)$ satisfies \eqref{E:wemax}.  Fix $\m = \m_0 \in \Delta(\fg_{1,<\sfa}) \backslash\Delta(w_1)$.  There exists a sequence of simple roots $\s_1,\ldots,\s_\ell \in \Sigma$ such that each $\m_i := \m + \s_1 + \cdots + \s_i$ is a root, for all $1\le i\le \ell$, and $\m_\ell$ is the highest root of $\fg$.  Since both $\m$ and $\m_\ell$ lie in $\Delta(\fg_1)$, the simple roots $\s_i$ must lie in $\Delta^+(\fg_0)$.  It follows from Remark \ref{R:Dw} that each $\m_i \in \Delta^+\backslash\Delta(w_1)$.  Moreover, since $\m_\ell$ is the highest root of $\fg$, at least one of the $\m_i$ is an element of $\Delta(\fg_{1,\sfa-1})$.  Let $U \subset \fg_{1,\sfa-1}$ be the irreducible $\fg_{0,0}$--submodule containing $\fg_{\m_i}$.  Let $\e \in \Delta(U)$ be the highest $\fg_{0,0}$--weight of $U$.  There exists a second sequence of simple roots $\s'_1,\ldots,\s'_m \in \Sigma(\fg_{0,0})$ such that each $\m_{i,k} := \m_i + \s'_1 + \cdots + \s'_k \in \Delta(U)$, with $1 \le k \le m$, and $\m_{i,m} = \e$.  Since $\m_i \in \Delta^+\backslash\Delta(w_1)$, Remark \ref{R:Dw} implies $\e \in \Delta^+\backslash\Delta(w_1)$.  As in the first paragraph of this proof, Remark \ref{R:Dw} forces $\Delta(w,\e) \subset \Delta^+\backslash\Delta(w_1)$.  Thus, $\Delta(w_1) \subset \Delta(w_\e)$.
\end{proof}

\begin{proof}[Proof of Lemma \ref{L:singXw}]
First we will show that the lemma is equivalent to \eqref{E:lem3}.  Recall from \eqref{E:Xw} that $Y_w = wX_w$.  So the lemma is equivalent to
\begin{subequations}
\begin{equation}\label{E:lem1}
  w X_{w_1} \subset \tSing(Y_w) \quad
  \hbox{if and only if}\quad
   \Delta(w) \backslash \Delta(w_1) \not\subset \Delta(\fg_{1,\sfa}) \, .
\end{equation}
By Lemma \ref{L:w'w} we have $w = \tau\,  w_1$, where $\tau = r_{\c_m}\cdots r_{\c_2}r_{\c_1}$ and $\{ \c_1,\c_2,\ldots,\c_m\} = \Delta(w) \backslash\Delta(w_1)$.  So $w X_{w_1} = \tau Y_{w_1}$, and \eqref{E:lem1} is equivalent to 
\begin{equation}\label{E:lem2}
  \tau Y_{w_1} \ \subset \ \tSing(Y_w) \quad\hbox{if and only if}\quad
  \Delta(w) \backslash \Delta(w_1) \not\subset \Delta(\fg_{1,\sfa}) \,.
\end{equation}
By Lemma \ref{L:w'w} we have $\Delta(w_1) \subset \Delta(w)$.  Equations \eqref{E:nw} and \eqref{E:gw} then imply $\fn_{w_1} \subset \fn_w\subset\fg_w$, and therefore $N_{w_1} \subset N_w \subset G_w$.  By Proposition \ref{P:BP}, $G_w \cdot o = Y_w \backslash \tSing(Y_w) = Y_w^0$.  So $N_{w_1}\cdot o \subset G_w \cdot o = Y_w^0$.  Since $Y_{w_1} = \overline{N_{w_1}\cdot o}$, we see that $\tau Y_{w_1} \subset \tSing(Y_w)$ if and only if $\tau N_{w_1} \cdot o \,\not\subset\, G_w \cdot o$.  Therefore, \eqref{E:lem2} is equivalent to 
\begin{equation} \label{E:lem3}
  \tau N_{w_1} \cdot o \ \not\subset \ G_w \cdot o
  \quad\hbox{if and only if}\quad
  \Delta(w) \backslash \Delta(w_1) \not\subset \Delta(\fg_{1,\sfa})\,.
\end{equation}
\end{subequations}

Let $\c \in \Delta(w)\backslash\Delta(w_1)$.  As an element of $W = N_G(H)/H$, the reflection $r_\c$ is represented by $\texp(\xi)\texp(\z)\texp(\xi) \in N_G(H)$, where the $\x\in\fg_{\c}$ and $\z\in\fg_{-\c}$ are scaled so that $\c([\x,\z]) = -2$; see, for example, the proof of \cite[Theorem 3.2.19(1)]{MR2532439}.  Lemma \ref{L:gw} and Corollary \ref{C:stab} imply that 
\begin{equation} \label{E:r}
  r_\c \in G_w
  \quad\hbox{if and only if}\quad
  \c \in \Delta(\fg_{1,\sfa}) \,.
\end{equation}  
In this case, $r_\c N_{w_1} \subset r_\c G_{w} = G_w$.  This establishes one direction of \eqref{E:lem3}: if $\Delta(w) \backslash \Delta(w_1) \subset \Delta(\fg_{1,\sfa})$, then $\tau \in G_w$ and $\tau N_{w_1} \cdot o \subset  G_w \cdot o = Y_w \backslash \tSing(Y_w)$.

Suppose $\Delta(w)\backslash\Delta(w_1) \not \subset \Delta(\fg_{1,\sfa})$.  By Lemma \ref{L:wemax} there exists $\e\in\Pi_{1,\sfa-1}$ such that $\Delta(w_1) \subset \Delta(w_\e)$.

\begin{claim*}
If $r_\e \not\in G_w P$, then $\tau N_{w_1} \cdot o \not\subset G_w \cdot o$.
\end{claim*}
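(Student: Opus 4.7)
The plan is to prove the claim by exhibiting the identity of $N_{w_1}$ as the witness: I will show $\tau\cdot o\notin G_w\cdot o$, equivalently $\tau\notin G_w P$. Using the nested chain $\Delta(w_1)\subset\Delta(w_\e)\subset\Delta(w)$ together with Lemma~\ref{L:w'w} applied to $w_1\le w_\e$ and Corollary~\ref{C:we} applied to $w_\e<w$, I first obtain a reduced factorization
\[
\tau \;=\; \tau_2'\cdot r_\e\cdot \tau_1\,,
\]
in which $\tau_1=r_{\d_k}\cdots r_{\d_1}$ with $\d_i\in\Delta(w_\e)\setminus\Delta(w_1)$ satisfies $\tau_1 w_1=w_\e$, and $\tau_2'=r_{\n_s}\cdots r_{\n_1}$ with $\n_i\in\Delta(w,\e)\setminus\{\e\}$ satisfies $\tau_2'\,r_\e\,w_\e=w$.

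Since each $\n_i$ lies in $\Delta(\fg_{1,\sfa})$ by \eqref{E:we}, identity \eqref{E:r} places $r_{\n_i}\in G_w$ and so $\tau_2'\in G_w$. The condition $\tau\in G_w P$ therefore reduces to $r_\e\tau_1\in G_w P$. Granting the auxiliary statement $\tau_1\cdot o\in G_w\cdot o$ --- say $\tau_1 = gp$ for some $g\in G_w$ and $p\in P$ --- one has $\tau\cdot o=\tau_2'\,r_\e\,g\cdot o$, and were this to lie in $G_w\cdot o$, a Bruhat-type absorption of $\tau_2',g\in G_w$ would force $r_\e\in G_w P$, contradicting the hypothesis. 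This completes the proof, modulo the auxiliary statement.

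The main obstacle is precisely the auxiliary statement $\tau_1\cdot o\in G_w\cdot o$. Each reflection factor $r_\d$ in the product $\tau_1$, with $\d\in\Delta(w_\e)\subset\Delta(w)$, admits the representation $r_\d=\exp(\xi)\exp(\z)\exp(\xi)$ where $\xi\in\fg_\d\subset\fp$ and $\z\in\fg_{-\d}\subset\fn_w\subset\fg_w$; since $\exp(\xi)$ fixes $o$, one has $r_\d\cdot o=\exp(\xi)\exp(\z)\cdot o \in P\cdot G_w\cdot o$. The naive chaining of these containments along $\tau_1=r_{\d_k}\cdots r_{\d_1}$ does not collapse to $G_w\cdot o$, because $PG_w\ne G_w P$ in general. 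A successful argument will likely either reorder the reflection factors using the closure properties of the root sets recorded in Remarks~\ref{R:abelian} and~\ref{R:Dw} together with the parabolic description $\fg_w=\tilde\fg_{\ge 0}$ from \eqref{E:gwP}, or identify $\tau_1\cdot o$ intrinsically as a point of the open $G_w$-orbit $Y_w^0$ via Proposition~\ref{P:BP}, thereby bypassing the explicit reflection analysis entirely.
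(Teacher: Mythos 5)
Your factorization $\tau=\tau_2'\,r_\e\,\tau_1$ and the observation that $\tau_2'\in G_w$ are correct, but the argument has two genuine gaps. First, the auxiliary statement $\tau_1\cdot o\in G_w\cdot o$ is precisely the hard content and is left unproved: the reflections in $\tau_1$ correspond to roots of $\Delta(w_\e)\backslash\Delta(w_1)$, which in general have $Z_w$-degree strictly less than $\sfa$, so the equivalence $r_\c\in G_w\Leftrightarrow\c\in\Delta(\fg_{1,\sfa})$ gives no control over them, and (as you yourself note) the naive chaining through $P\,G_w$ does not close up. Second, even granting $\tau_1=gp$ with $g\in G_w$ and $p\in P$, the proposed absorption fails: from $\tau_2'\,r_\e\,g\in G_wP$ one may cancel $\tau_2'$ on the left to get $r_\e\in G_w P g^{-1}$, but $G_wP$ is left-$G_w$-stable and right-$P$-stable, \emph{not} right-$G_w$-stable, so this only places $r_\e$ in the double coset $G_wPG_w$, which properly contains $G_wP$ in general.

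The paper avoids $\tau_1$ altogether. Instead of testing the single point $\tau\cdot o$, it translates the desired non-containment back into the geometric statement $X_{w_1}\subset\tSing(X_w)$ (the equivalence having been set up at the start of the proof of Lemma \ref{L:singXw}) and invokes monotonicity: $\Delta(w_1)\subset\Delta(w_\e)$ gives $X_{w_1}\subset X_{w_\e}$, so it suffices to prove $X_{w_\e}\subset\tSing(X_w)$. For $w_\e$ the relevant word is $r_{\n_m}\cdots r_{\n_1}r_\e$ of Corollary \ref{C:we}, in which \emph{every} factor except $r_\e$ lies in $G_w$; hence the single witness point $r_\e\cdot o$ suffices and no analogue of $\tau_1$ ever arises. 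Your suggested repair of identifying $\tau_1\cdot o$ as a point of $Y_w^0$ aims at the wrong target: what is needed is not smoothness of one point but the replacement of the whole subvariety $X_{w_1}$ by the larger $X_{w_\e}$ before any group-theoretic computation begins, and that replacement is a soft geometric step (closedness and $B$-stability of the singular locus), not a reflection calculation.
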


\noindent Assume that claim holds.  Then to establish the second direction of \eqref{E:lem3}, it remains to show that the reflection $r_\e \in W$ can not be represented by an element $\tilde p p \in G_w P$ with $\tilde p \in G_w$ and $p \in P$. 

Recall the $Z_\tti$--graded decomposition \eqref{E:gi} of $\fg$.  Let $G_0 := \{ g \in G \ | \ \tAd_g(\fg_j) \subset \fg_j\}$.  Then $G_0$ is a closed subgroup of $G$ with Lie algebra $\fg_0$.  By \cite[Theorem 3.1.3]{MR2532439}, the map $\fg_1 \times G_0 \to P$ sending $(u,g) \mapsto \texp(u)g$ is a diffeomorphism.  Likewise, recall the $\wt Z_w$--graded decomposition \eqref{E:tg} of $\fg$. Again, $\wt G_0 := \{ g \in G \ | \ \tAd_g(\tilde\fg_k) \subset \tilde\fg_k\}$ is a closed subgroup of $G$ with Lie algebra $\tilde \fg_0$.   By \eqref{E:gwP}, $\fg_w = \tilde \fg_{\ge0} = \tilde\fg_0 \op \tilde\fg_+$, and \cite[Theorem 3.1.3]{MR2532439} implies that the map $\tilde \fg_+ \times \wt G_0  \to G_w$ sending $(\tilde u,\tilde g) \mapsto \texp(\tilde u)\tilde g$ is a diffeomorphism.  

We will argue by contradiction, supposing that $\tilde p p \in G_w P$ represents the reflection $r_\e$.  In particular, $\tAd_{\tilde p p} : \fg \to \fg$ preserves the Cartan subalgebra $\fh$.  Write $\tilde p = \texp(\tilde u) \tilde g$, with $\tilde u \in \tilde \fg_+$ and $\tilde g \in \wt G_0$, and $p = \texp(u)g$, with $g \in G_0$ and $u \in \fg_1$. Fix $H \in \fh$, and define $h_s \in \tilde \fg_s$ by $\tAd_p H = \sum_{s=-\sft}^\sft \tilde h_s$, and set $\tilde h_s = \tAd_{\tilde g} h_s \in \tilde \fg_s$.  Define $\tilde h_{s,r} \in \tilde \fg_r$ by $\tAd_{\texp(\tilde u)} \tilde h_s = \tilde h_{s,s} + \tilde h_{s,s+1} + \cdots + \tilde h_{s,\sft}$, and note that $\tilde h_{s,s} = \tilde h_s$.  Then 
$$ \textstyle
  \tAd_{\tilde p p} H \ = \ \sum_{s=-\sft}^\sft \sum_{r=s}^\sft \tilde h_{s,r}
  \ = \ \sum_{r=-\sft}^\sft \tilde H_r \,, 
$$
where $\tilde H_r := \sum_{s=-\sft}^r \tilde h_{s,r} \in \tilde \fg_r$.  

Since $\tAd_{\tilde p p}$ preserves $\fh$, and $\fh\subset\tilde\fg_0$, it must be the case that 
\begin{equation}\label{E:lem4}
  \tilde H_0 \in \fh \,, \quad \hbox{ and }\quad \tilde H_r = 0 \,, \ \hbox{ when $r \not=0$.}
\end{equation}  
In particular, $\tilde H_{-\sft} = \tilde h_{-\sft,-\sft} = \tilde h_{-\sft} = 0$.  This in turn yields $\tilde h_{-\sft,r}=0$ for all $r$.  Moreover, since $\tilde h_{-\sft} = \tAd_{\tilde g} h_{-\sft}$, we also have $h_{-\sft}=0$.  Next, $0=\tilde H_{1-\sft} = \tilde h_{-\sft,1-\sft} + \tilde h_{1-\sft,1-\sft} = \tilde h_{1-\sft}$.  As above, this implies $\tilde h_{1-\sft,r}=0$, for all $r$, and $h_{1-\sft}=0$.  Continuing by induction, we see that 
\begin{equation}\label{E:lem5}
  h_s=0 \quad \hbox{for all} \quad s < 0 \,.
\end{equation}
In particular,  $\tAd_p H \in \tilde \fg_{\ge0}$.  Our choice of $H \in \fh$ was arbitrary, so $\tAd_p \fh \subset \tilde \fg_{\ge0} = \fg_w$.  This implies $p \in G_w$.  In particular, $\tilde p p \in G_w$.  

This yields a contradiction as follows.  Note that $\fg_{-\e} \subset \tilde \fg_{1}$.  So given any $q \in G_w$, we have $\tAd_q \fg_{-\e} \subset \tilde \fg_{\ge1}$. On the other hand, $\fg_\e \subset \tilde \fg_{-1}$, and $\tAd_{r_\e}(\fg_{-\e}) = \fg_\e$.  Therefore, there exists no element $q \in G_w$ such that $\tAd_q (\fg_{-\e}) = \tilde \fg_\e$.  Modulo the claim, this completes the proof of Lemma \ref{L:singXw}.






\medskip

\noindent\emph{Proof of claim.}  By Lemma \ref{L:w'w}, $w_1 \le w_\e$.  Therefore, $X_{w_1} \subset X_{w_\e}$.  So to see that $X_{w_1} \subset \tSing(X_w)$, it suffices to show that $X_{w_\e} \subset \tSing(X_w)$.  Equivalently, as discussed above, $\tau N_{w_\e} \cdot o \,\not\subset\, G_w\cdot o$, where $\tau = r_{\n_m}\cdots r_{\n_1} r_\e$ is as given by Corollary \ref{C:we}.  Since $\n_\ell \in \Delta(\fg_{1,\sfa})$, we have $r_{\n_\ell} \in G_w$, by \eqref{E:r}.  So $\tau N_{w_\e} \cdot o \not\subset G_w \cdot o$ if and only if $r_\e N_{w_\e} \cdot o \not\subset G_w \cdot o$.  In particular, to see that $\tau N_{w_\e}\cdot o \not\subset G_w \cdot o$, it suffices to show that $r_\e \cdot o \not\in G_w \cdot o$.  Lifting to $G$, the latter is equivalent to $r_\e \not \in G_w P$.  
\end{proof}

\appendix\section{Geometric descriptions of $Y_{\sfa,\ttJ}$} \label{S:app1}

The classical cominuscule $G/P$ admit geometric, partition based descriptions.  A `dictionary' relating these descriptions to the representation theoretic $(\sfa,\ttJ)$--description (Section \ref{S:aJ}) is given in \cite[Appendix A]{flex}.  We now briefly summarize those results for the reader's convenience.

\subsection{Notation} \label{A:not}
Given a vector space $V \simeq\bC^m$, we fix a basis $\{ e_1 , \ldots , e_m\}$.  Let $\{ e^1 , \ldots , e^m\}$ denote the dual basis of $V^*$.  Set 
$$
  e^k_\ell \ \dfn \  e_\ell \ot e^k \ \in \ \tEnd(V)
  \quad\hbox{for all} \quad \le k,\ell \le m \,.
$$

$\bullet$ When $V \simeq \bC^{2n+1}$ is of odd dimension and admits a nondegenerate symmetric bilinear form $(\cdot,\cdot)$, then we will normalize the basis so that $(e_k , e_\ell) = (e_{n+k} , e_{n+\ell}) = (e_k , e_{2n+1} ) = (e_{n+k} , e_{2n+1}) = 0$, $(e_k , e_{n+\ell} ) = \d_{k\ell}$, for all $1 \le k,\ell\le n$, and $(e_{2n+1} , e_{2n+1})=1$.  

$\bullet$ When $V\simeq\bC^{2n}$ is of even dimension and admits a nondegenerate (symmetric or skew-symmetric) bilinear form $(\cdot,\cdot)$, we normalize the basis so that  $(e_k , e_\ell) = (e_{n+k} , e_{n+\ell}) = 0$ and $(e_k , e_{n+\ell} ) = \d_{k\ell}$, for all $1 \le k,\ell\le n$.  In this setting, we fix an isotropic flag $F^\sbullet$ in $\bC^{2n}$ by specifying $F^k = \langle e_1,\ldots,e_k\rangle$ and $(F^k,F^{2n-k}) = 0$ for $1\le k \le n$.

\subsection{Odd dimensional quadrics \boldmath $Q^{2n-1} = B_n/P_1$ \unboldmath } \label{S:aJodd}
Set $m = 2n-1$.  There is a bijection between $W^\fp\backslash\{1,w_0\}$ and pairs $\sfa,\ttJ$ such that $\ttJ = \{ \ttj \} \subset \{2,\ldots,n\}$ and $\sfa \in \{0,1\}$; see \cite[Corollary 3.17]{MR2960030}.  If $\sfa = 0$, then 
$$
  Y_w \ = \ \bP \langle e_1 , \ldots,e_\ttj \rangle = \bP^{\ttj-1}\,.  
$$
If $\sfa=1$, then 
$$
  Y_w \ = \ Q^m \cap \bP\langle e_1 , \ldots , e_{n+1} , e_{n+\ttj+1} , \ldots , e_{2n+1} \rangle 
  \,.
$$

\subsection{Even dimensional quadrics \boldmath $Q^{2n-2} = D_n/P_1$\unboldmath} \label{S:aJeven}
Set $m = 2n-2$.  There is a bijection between $W^\fp\backslash\{1,w_0\}$ and pairs $\sfa,\ttJ$ such that either
\begin{circlist}
\item  $\sfa=0$ and $\ttJ = \{ \ttj\} \subset \{2,\ldots,n\}$ or $\ttJ = \{n-1,n\}$; or
\item $\sfa=1$ and $\ttJ = \{\ttj\} \subset \{2,\ldots,n-2\}$ or $\ttJ = \{n-1,n\}$.
\end{circlist} 
See \cite[Corollary 3.17]{MR2960030}.  First suppose that $\sfa=0$.  If $\ttJ = \{ \ttj\}$ with $2 \le \ttj \le n-2$, then $X_w = \bP^{\ttj-1}$.  If $\ttJ=\{n-1\}$ or $\ttJ=\{n\}$, then $X_w = \bP^{n-1}$.  If $\ttJ = \{ n-1 , n \}$, then $X_w = \bP^{n-2}$.

Next suppose that $\sfa=1$.  If $\ttJ = \{ \ttj \}$ with $2 \le \ttj \le n-2$, then $X_w = Q^m \cap \bP \langle e_1 , \ldots , e_{n+1} , e_{n+\ttj+1} , \ldots , e_{2n}  \rangle$.  If $\ttJ = \{n-1,n\}$, then $X_w = Q^m \cap \bP\langle e_1 , \ldots , e_{n+1} , e_{2n} \rangle$.

\subsection{Grassmannians $\tGr(\tti,n+1) = A_n/P_\tti$} \label{S:aJA}
There is a bijection between $W^\fp\backslash\{1,w_0\}$ and pairs $\sfa,\ttJ$ such that $\ttJ = \{ \ttj_\sfp \,,\,\ldots \,,\, \ttj_1 \,,\, \ttk_1 \,,\, \ldots \,,\, \ttk_\sfq \} \subset \{1,\ldots,n\} \backslash\{\tti\}$ is ordered so that 
$$
  1 \ \le \ \ttj_\sfp \,<\,\cdots \,<\, \ttj_1 \,< \ \tti \ < 
  \, \ttk_1 \,<\, \cdots \,<\, \ttk_\sfq \ \le \ n \,,
$$
and satisfying $\sfp,\sfq \in \{\sfa, \sfa+1\}$; see \cite[Corollary 3.17]{MR2960030}.  (Beware, these $\sfp,\sfq$ do not agree with those of \cite{MR2960030}, cf. Remark \ref{R:newaJ}.)  For convenience we set
$$
  \ttj_{\sfp+1} \, := \, 0 \, , \quad \ttj_0 \, := \, \tti \, =: \, \ttk_0 \,, 
  \quad \ttk_{\sfq+1} \, := \, n+1 \, .
$$

It is well-known that Schubert varieties in $X=\tGr(\tti, n+1)$ are indexed by partitions 
\begin{equation} \label{E:Apart}
  \lambda \,=\, (\lambda_1,\ldots,\lambda_\tti) \in \bZ^\tti 
  \quad\hbox{such that}\quad
  1 \le \lambda_1 < \lambda_2 < \cdots < \lambda_\tti \le n+1 \,,
\end{equation}
cf. \cite[\S3.1.3]{MR1782635}.  Fix a flag $0 \subset F^1 \subset F^2 \subset\cdots\subset F^{n+1}$.  The corresponding Schubert variety is
\begin{equation}\label{E:Y1}
  Y_\lambda(F^\sbullet) \ := \ \{ E \in X \ | \ 
  \tdim( E \cap F^{\lambda_k}) \ge k \,,\ \forall \ k \} \,.
\end{equation}
Note that, if $\lambda_{k+1} = \lambda_k+1$, then the condition $\tdim(E\cap F^{\lambda_k}) \ge k$ is redundant; it is implied by $\tdim(E\cap F^{\lambda_{k+1}}) \ge k+1$.  To remove the redundancies, decompose $\lambda = \m_\sfp\cdots\m_1\m_0$ into maximal blocks of consecutive integers.  For example, if $\lambda = (2,3,4,7,8,12)$, then $\m_2 = (2,3,4)$, $\m_1 = (7,8)$ and $\m_0 = (12)$.  Let 
\begin{equation} \label{E:jA} 
  \ttj_\ell(\lambda) \ = \ |\m_\sfp\cdots\m_\ell|
\end{equation}
be the length of the sub-partition $\m_\sfp\cdots\m_\ell$.  (In all cases, $\ttj_0 = |\lambda| = \tti$.)  The following is \cite[Proposition 3.30]{MR2960030}.

\begin{lemma}[{\cite{MR2960030}}] \label{L:part}
Let $\lambda = (\lambda_1,\ldots,\lambda_\tti)$ be a partition satisfying \eqref{E:Apart}, and let $\lambda = \m_\sfp\cdots\m_1\m_0$ be the decomposition of $\lambda$ into maximal blocks of consecutive integers.  The pair $\sfa$, $\ttJ = \{\ttj_\sfp,\ldots,\ttj_1,\ttk_1,\ldots,\ttk_\sfq\}$ characterizing the Schubert variety $Y_\lambda$ is given by \eqref{E:jA}, 
$$
  \{ \ttk_1 , \ldots , \ttk_\sfq \} \ = \ 
  \{ \tti - \ttj_\sfp + \lambda_{\ttj_\sfp} \,,\ldots ,\,
  \tti - \ttj_1+\lambda_{\ttj_1} \,,\, \lambda_\tti\} 
  \backslash\{\tti,n+1\} \,,
$$
and
$$
  \sfa \ = \ \left\{ \begin{array}{ll}
    \sfp  & \hbox{ if } \lambda_1 > 1\\
    \sfp-1  & \hbox{ if } \lambda_1 = 1 
  \end{array} \right\} \ = \ \left\{ \begin{array}{ll}
    \sfq \,, & \hbox{ if } \lambda_\tti = n+1 \\
    \sfq-1 \,, & \hbox{ if } \lambda_\tti < n+1 \,.
  \end{array} \right.
$$
Conversely, given $\sfa,\ttJ$, the associated partition $\lambda = \m_\sfp \cdots \m_1\m_0$ is given by 
$$
  \m_\ell \ = \ 
  ( \ttj_{\ell+1} + \ttk_m - \tti + 1 \,,\ldots,\, 
    \ttj_\ell + \ttk_m - \tti) \,,
$$
with $\ell+m = \sfa+1$.
\end{lemma}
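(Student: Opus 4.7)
The plan is to translate both the partition description of $Y_\lambda$ and the $(\sfa,\ttJ)$ description into explicit descriptions of the subspace $\fn_w\subset\fg_{-1}$, and match them directly. I begin by fixing the standard matrix realization from Section~\ref{S:aJA}: $\fg_{-1}\subset\fsl_{n+1}\bC$ is spanned by $\{e^j_k : 1\le j\le\tti<k\le n+1\}$ with $e^j_k\in\fg_{-\a_{j,k-1}}$. Since $\a_{j,k-1}(Z_\ttm)=1$ precisely when $j\le\ttm\le k-1$, the element $Z_w=\sum_{\ttm\in\ttJ}Z_\ttm$ acts on $e^j_k$ by $-|\ttJ\cap[j,k-1]|$. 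By Proposition~\ref{P:aJ},
\[
\fn_w\ =\ \tspan\{e^j_k\ :\ |\ttJ\cap[j,k-1]|\le\sfa\}.
\]
For the $\ttJ$ and $\sfa$ of the lemma, this exhibits $\fn_w$ as a staircase-shaped subspace of the $\tti\times(n+1-\tti)$ rectangle, whose horizontal cuts sit at the $\ttj$'s, vertical cuts at the $\ttk$'s, and whose depth is $\sfa+1$.

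Next I perform the analogous analysis for $Y_\lambda$. Writing $\n_1<\cdots<\n_{n+1-\tti}$ for the complement of $\{\lambda_1,\ldots,\lambda_\tti\}$ in $\{1,\ldots,n+1\}$, the element $w\in W^\fp$ with $X_w=Y_\lambda$ has one-line notation $w^{-1}=(\lambda_1,\ldots,\lambda_\tti,\n_1,\ldots,\n_{n+1-\tti})$. A direct computation of $\Delta(w)=w\Delta^-\cap\Delta^+$ then yields
\[
\fn_w\ =\ \tspan\{e^j_k\ :\ \lambda_j>\n_{k-\tti}\}.
\]
This is again a staircase, and decomposing $\lambda=\m_\sfp\cdots\m_1\m_0$ into maximal blocks of consecutive integers places the staircase's inner corners precisely at the gaps between the $\m_\ell$. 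Matching the two staircases gives the lemma: the column cuts occur at $j=\ttj_\ell=|\m_\sfp\cdots\m_\ell|$ (the cumulative block lengths), the row cuts at $k-1=\tti-\ttj_\ell+\lambda_{\ttj_\ell}$ (coming from the corresponding gaps in the $\n$-indexing) together with $k-1=\lambda_\tti$, and the two boundary corrections $\sfa=\sfp$ vs.\ $\sfp-1$ (depending on $\lambda_1>1$ or $\lambda_1=1$) and $\sfa=\sfq$ vs.\ $\sfq-1$ arise according as the staircase does not, or does, touch the corresponding boundary of the rectangle --- equivalently, according as the candidate cuts $\tti$ and $n+1$ are or are not degenerate and hence excluded from $\ttJ$.

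The converse formula $\m_\ell=(\ttj_{\ell+1}+\ttk_m-\tti+1,\ldots,\ttj_\ell+\ttk_m-\tti)$ with $\ell+m=\sfa+1$ is then the arithmetic inverse of this dictionary. The main obstacle is the careful verification of the inversion-set formula $\fn_w=\tspan\{e^j_k:\lambda_j>\n_{k-\tti}\}$ directly from the intersection-rank conditions defining $Y_\lambda$, and the bookkeeping required to match its staircase corners with the block decomposition of $\lambda$, in particular handling the two boundary cases that give the $\sfp$ vs.\ $\sfp-1$ and $\sfq$ vs.\ $\sfq-1$ dichotomy.
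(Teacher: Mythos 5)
The paper never proves this lemma: it is imported verbatim as \cite[Proposition 3.30]{MR2960030}, and the surrounding appendix explicitly defers the partition/$(\sfa,\ttJ)$ dictionary to \cite[Appendix A]{flex}. So there is no in-paper argument to compare you against; what you have written is a self-contained substitute for a citation. Your strategy --- compute $\fn_w$ once from Proposition \ref{P:aJ} and once from the partition, then match the two staircases --- is the natural one, and both of your key formulas are correct. The first, $\fn_w=\tspan\{e^j_k: |\ttJ\cap[j,k-1]|\le\sfa\}$, is immediate from \eqref{E:Dw} because $e^j_k$ has weight $-(\a_j+\cdots+\a_{k-1})$; it reproduces exactly the degree matrix in the $\tGr(5,11)$ example of Section \ref{S:sing}. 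The second, $\fn_w=\tspan\{e^j_k:\lambda_j>\n_{k-\tti}\}$, is the standard description of the inversion set of a Grassmannian permutation, and on that same example (where $\lambda=(2,3,6,9,10)$ and $\n=(1,4,5,7,8,11)$) it recovers the displayed $\fn_w$. The matching then goes as you indicate: on the $(\sfa,\ttJ)$ side the block with $\ttj_{\ell+1}<j\le\ttj_\ell$ and $\ttk_m<k\le\ttk_{m+1}$ lies in $\fn_w$ iff $\ell+m\le\sfa$; on the $\lambda$ side the row cuts fall at $k=\tti-\ttj_\ell+\lambda_{\ttj_\ell}$ because exactly $\lambda_{\ttj_\ell}-\ttj_\ell$ of the $\n$'s are smaller than $\lambda_{\ttj_\ell}$; and the extreme blocks $(\ell,m)=(\sfp,0)$ and $(0,\sfq)$ are occupied iff $\lambda_1>1$ and $\lambda_\tti=n+1$ respectively, which is precisely the $\sfp$ versus $\sfp-1$ and $\sfq$ versus $\sfq-1$ dichotomy. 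The converse formula for $\m_\ell$ is indeed just the inversion of this arithmetic.

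Two cautions before you call this a proof. First, you have labelled the two central verifications ``the main obstacle'' and left them undone; they are routine, but they \emph{are} the proof, so they must be written out. Second, be precise about conventions: the paper works with $X_w=\overline{Bw^{-1}\cdot o}$ and its translate $Y_w=wX_w$, and in the worked example the flag realizing $Y_w$ in the form \eqref{E:Y1} is \emph{not} the coordinate flag $F^k=\langle e_1,\ldots,e_k\rangle$. Since the pair $(\sfa,\ttJ)$ depends only on the type $w$, this cannot derail the statement, but your one-line-notation claim for $w^{-1}$ must be pinned to a definite choice of flag and coset representative, or the inequality in $\Delta(w)=w\Delta^-\cap\Delta^+$ will come out transposed.
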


\begin{example}
Consider $X = \tGr(5,13) \simeq A_{12}/P_5$.  For the marking $\ttJ = \{ 2,3,7,9,12 \}$ and integer $\sfa=2$, we have $\lambda = (3,4,7,11,12)$.
\end{example}

\subsection{Lagrangian Grassmannians $\tLG(n,2n) = C_n/P_n$} \label{S:aJC}

There exists a bijection between $W^\fp\backslash\{1,w_0\}$ and pairs $\sfa \ge0$ and $\ttJ = \{ \ttj_\sfp , \ldots , \ttj_1 \} \subset \{1,\ldots,n-1\}$ satisfying 
\begin{equation}\label{E:CD_J}
  1 \ \le \ \ttj_\sfp \,<\,\cdots \,<\, \ttj_1 \  \le \ n-1
\end{equation}
and 
$$
  \sfp \ \in \ \{\sfa, \sfa+1\}\,; 
$$
see  \cite[Corollary 3.17]{MR2960030}.  (These $\ttj_\ell$ have the opposite order of those in \cite{MR2960030}.)  For convenience we set
\begin{equation}\label{E:CDendpts}
  \ttj_{\sfp+1} \, := \, 0 \, , \quad \ttj_0 \, := \, n \, .
\end{equation}

It is well-known that Schubert varieties in $X = \tLG(n,2n)$ are indexed by partitions $\lambda = (\lambda_1,\lambda_2,\ldots,\lambda_n)$ such that  
\begin{subequations}\label{SE:Cpart}
\begin{eqnarray}
  & & 1 \le \lambda_1 < \lambda_2 < \cdots < \lambda_n \le 2n \,, \quad \hbox{and} \\
\label{E:Cpart}
  & & \lambda_i \in \lambda \quad\hbox{if and only if} \quad 
  2n+1-\lambda_i\not\in\lambda \,, 
\end{eqnarray} 
\end{subequations}
cf. \cite[\S9.3]{MR1782635}.  The corresponding Schubert variety is given by \eqref{E:Y1}, with $F^\sbullet$ an isotropic flag as in Section \ref{A:not}.

\begin{lemma}[{\cite{flex}}] \label{L:Cpart}
Let $\lambda = (\lambda_1,\ldots,\lambda_n)$ be a partition satisfying \eqref{SE:Cpart}.  Let $\lambda = \m_\sfp\cdots\m_1\m_0$ be a decomposition of $\lambda$ into $\sfp+1$ maximal blocks of consecutive integers.  Then $\ttJ(\lambda) = \{ \ttj_\sfp(\lambda) , \ldots , \ttj_1(\lambda)\}$ is given by \eqref{E:jA}, and 
$$
  \sfa(\lambda) \ = \ \left\{ \begin{array}{ll}
  \sfp - 1 & \hbox{ if } \lambda_1 = 1 \\
  \sfp & \hbox{ if } \lambda_1 > 1 \, .
  \end{array}\right.
$$
Conversely, given $\sfa$ and $\ttJ=\{\ttj_\sfp,\cdots,\ttj_1\}$ we construct $\lambda(\sfa,\ttJ)=\m_\sfp(\sfa,\ttJ)\cdots\m_0(\sfa,\ttJ)$ by
\begin{equation} \label{E:Cblock}
  \m_\ell(\sfa,\ttJ) \ = \
  (n+1+\ttj_{\ell+1}-\ttj_m \,,\, \ldots \,,\, n+\ttj_\ell-\ttj_m ) \,,
\end{equation}
with $\ell+m=\sfa+1$.
\end{lemma}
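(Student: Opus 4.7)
The plan is to proceed in parallel with the proof of Lemma \ref{L:part} for the Grassmannian case, adapted to the symplectic setting, and ultimately to invoke \cite{flex}. Concretely, there are two directions to establish: from the partition $\lambda$ extract $(\sfa,\ttJ)$, and conversely reconstruct $\lambda$ from $(\sfa,\ttJ)$ via \eqref{E:Cblock}.

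First I would translate the geometric definition \eqref{E:Y1} of $Y_\lambda$ (using the isotropic flag $F^\sbullet$ of Section \ref{A:not}) into root-theoretic language. For $C_n/P_n$, the subspace $\fg_{-1}\subset \fsp_{2n}\bC$ is naturally identified with the space of symmetric $n\times n$ matrices; the root $-\a_{jk}$ of a diagonal entry and the root of an off-diagonal entry must be catalogued in terms of the simple roots $\a_1,\ldots,\a_n$. A dimension condition $\tdim(E\cap F^{\lambda_k})\ge k$ translates into the requirement that a specific collection of negative root spaces lie inside $\fn_w$. Running through the conditions coming from $\lambda$ and cutting out the redundancies (the constraint for $\lambda_k$ is implied by that for $\lambda_{k+1}$ whenever $\lambda_{k+1}=\lambda_k+1$) identifies $\Delta(w)\subset\Delta(\fg_1)$ explicitly; it is exactly the set of roots whose ``box'' lies in $\lambda$ when $\lambda$ is viewed as a self-conjugate partition (in the type C sense, as ensured by \eqref{E:Cpart}).

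Next I would match this $\Delta(w)$ with the prescription $\Delta(w)=\Delta(\fg_{1,\le\sfa})$ from Proposition \ref{P:aJ}. The $Z_w$-eigenvalue of a root $\alpha\in\Delta(\fg_1)$ is just the sum of its coefficients at the simple roots indexed by $\ttJ$. Decomposing $\lambda=\m_\sfp\cdots\m_1\m_0$ into maximal blocks of consecutive integers corresponds precisely to the $Z_w$-eigenspace decomposition \eqref{E:nw_aJ} of $\fn_w$: each block $\m_\ell$ corresponds to one eigenvalue. The indices $\ttj_\ell=|\m_\sfp\cdots\m_\ell|$ appear as the locations where the structure of $\lambda$ changes, which matches the combinatorial description \eqref{E:jA}. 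The branching into cases $\lambda_1=1$ versus $\lambda_1>1$ comes from whether or not the smallest block $\m_0$ meets the ``boundary'' at $1$ (which corresponds to whether the lowest-degree piece $\fg_{-1,0}$ is a full irreducible component of $\fn_w$ or a trivial one); this is what shifts $\sfa$ from $\sfp$ to $\sfp-1$. The endpoint conventions \eqref{E:CDendpts} are what allow the formula $\ell+m=\sfa+1$ to be interpreted uniformly.

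For the converse direction, given $(\sfa,\ttJ)$ I would read off the $(Z_\tti,Z_w)$-bigrading \eqref{E:gkl} on $\fg_{-1}$ in the symmetric-matrix model and verify that formula \eqref{E:Cblock} produces exactly the entries of the partition corresponding to $\fn_w=\fg_{-1,0}\op\cdots\op\fg_{-1,-\sfa}$; the self-conjugacy condition \eqref{E:Cpart} is automatic because $\fn_w$ is stable under the Cartan involution realizing the symmetry of $\fg_{-1}$. The main obstacle is purely bookkeeping: the type C root system has both short and long roots, which forces one to treat the ``diagonal'' roots (those with $\a_n$-coefficient $1$ only once in the summand structure) differently from the ``off-diagonal'' ones, and one must track this distinction carefully through the block decomposition. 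Once the indexing conventions of \eqref{E:CD_J} and \eqref{E:CDendpts} are lined up with the partition's block structure, the identification is a direct combinatorial verification.
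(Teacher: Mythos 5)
The paper itself contains no proof of Lemma \ref{L:Cpart}: it is imported from \cite[Appendix A]{flex}, and the appendix here explicitly only ``summarizes those results for the reader's convenience.'' So there is no in-paper argument to measure your sketch against; the closest internal analogue is the type A dictionary (Lemma \ref{L:part}), which is likewise quoted without proof. Your overall strategy is the natural one and is surely how the result is obtained: realize $\fg_{-1}$ for $C_n/P_n$ as the symmetric matrices $S^2\bC^n$, translate the incidence conditions \eqref{E:Y1} with respect to the isotropic flag into a subset of $\Delta(\fg_1)$, and match that subset against $\Delta(w)=\Delta(\fg_{1,\le\sfa})$ from Proposition \ref{P:aJ}.

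That said, as written the sketch defers exactly the content of the lemma, and one of its guiding assertions is not literally correct. The claim that ``each block $\m_\ell$ corresponds to one eigenvalue'' of $Z_w$ fails: writing a root of $\fg_1$ as $\varepsilon_i+\varepsilon_j$ with $i\le j$, its $Z_w$-eigenvalue is $|\ttJ\cap\{i,\ldots,n-1\}|+|\ttJ\cap\{j,\ldots,n-1\}|$, so it depends on the \emph{pair} of block indices attached to $i$ and $j$ --- which is precisely why \eqref{E:Cblock} couples $\ell$ with $m$ via $\ell+m=\sfa+1$ --- and when $\lambda_1=1$ there are $\sfp+1=\sfa+2$ blocks but only $\sfa+1$ eigenvalues $0,\ldots,\sfa$ occurring in $\fn_w$. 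The actual correspondence is that the index groups $\{\ttj_{\ell+1}+1,\ldots,\ttj_\ell\}$ cut out by $\ttJ$ match the blocks $\m_\ell$ of $\lambda$, and $\Delta(w)$ is the set of $\varepsilon_i+\varepsilon_j$ whose two group indices sum to at most $\sfa$; verifying that this set equals the box set of $\lambda$ (and that \eqref{E:Cblock} inverts the assignment, including the $\lambda_1=1$ versus $\lambda_1>1$ dichotomy and the admissibility condition \eqref{E:Cpart}, for which the Cartan-involution remark is a heuristic rather than an argument) is the bookkeeping you label ``direct combinatorial verification.'' That verification \emph{is} the lemma, so the proposal is a correct plan rather than a proof.
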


As an example, Table \ref{t:C5P5} lists the partitions $\lambda$ and corresponding $\sfa:\ttJ$ values for the Schubert varieties in $\tLG(5,10)$.
\begin{small}
\begin{table}[!h]
\caption[Lagrangian Grassmannian $\tLG(5,10)$]{Schubert varieties of $\tLG(5,10)$.}
\label{t:C5P5}
\renewcommand{\arraystretch}{1.2}
\begin{tabular}{|c|c||c|c||c|c|}
\hline
   $\lambda$ & $\sfa:\ttJ$ & $\lambda$ & $\sfa:\ttJ$ 
   & $\lambda$ & $\sfa:\ttJ$ \\ \hline \hline
      $(1,2,3,4,5)$ &         & 
      $(1,2,3,4,6)$ & $0:4$ & 
      $(1,2,3,5,7)$ & $1:3,4$ \\ \hline
      $(1,2,4,5,8)$ & $1:2,4$ & 
      $(1,2,3,6,7)$ & $0:3$ & 
      $(1,3,4,5,9)$ & $1:1,4$ \\ \hline
      $(1,2,4,6,8)$ & $2:2,3,4$ & 
      $(2,3,4,5,10)$ & $1:4$ & 
      $(1,3,4,6,9)$ & $2:1,3,4$ \\ \hline
      $(1,2,5,7,8)$ & $1:2,3$ & 
      $(2,3,4,6,10)$ & $2:3,4$ & 
      $(1,3,5,7,9)$ & $3:1,2,3,4$ \\ \hline
      $(1,2,6,7,8)$ & $0:2$ & 
      $(2,3,5,7,10)$ & $3:2,3,4$ & 
      $(1,4,5,8,9)$ & $1:1,3$ \\ \hline
      $(1,3,6,7,9)$ & $2:1,2,4$ & 
      $(2,4,5,8,10)$ & $3:1,3,4$ & 
      $(2,3,6,7,10)$ & $2:2,4$ \\ \hline
      $(1,4,6,8,9)$ & $2:1,2,3$ & 
      $(3,4,5,9,10)$ & $1:3$ & 
      $(2,4,6,8,10)$ & $4:1,2,3,4$ \\ \hline
      $(1,5,7,8,9)$ & $1:1,2$ & 
      $(3,4,6,9,10)$ & $2:2,3$ & 
      $(2,5,7,8,10)$ & $3:1,2,4$ \\ \hline
      $(1,6,7,8,9)$ & $0:1$ & 
      $(3,5,7,9,10)$ & $3:1,2,3$ & 
      $(2,6,7,8,10)$ & $2:1,4$ \\ \hline
      $(4,5,8,9,10)$ & $1:2$ & 
      $(3,6,7,9,10)$ & $2:1,3$ & 
      $(4,6,8,9,10)$ & $2:1,2$ \\ \hline
      $(5,7,8,9,10)$ & $1:1$ & 
      $(6,7,8,9,10)$ &       &  &  \\
\hline
\end{tabular}
\end{table}
\end{small}
%

\subsection{Spinor varieties $\cS_n = D_n/P_n$} \label{S:aJD}

Given $\sfa = \sfa(w)$ and $\ttJ = \ttJ(w)$, note that 
$$
  \a_{n-1}(Z_w) = 0 \ \hbox{ if } n-1\not\in\ttJ \,,\quad
  \hbox{ and } \quad  \a_{n-1}(Z_w) = 1 \ \hbox{ if } n-1\in\ttJ \,.
$$
Define 
\begin{equation} \label{E:r}
  \sfr \ = \ \left\lceil \half \left(\sfa + \a_{n-1}(Z_w)\right) \right\rceil \ = \ 
  \left\{ \renewcommand{\arraystretch}{1.2} \begin{array}{ll}
    \lceil \sfa/2 \rceil & \hbox{ if } n-1\not\in\ttJ \,,\\
    \lfloor \sfa/2 \rfloor + 1 & \hbox{ if } n-1\in\ttJ \,;
  \end{array} \right.
\end{equation}
There exists a bijection between $W^\fp\backslash\{1,w_0\}$, and pairs $\sfa \ge0$ and $\ttJ = \{ \ttj_\sfp , \ldots , \ttj_1 \} \subset \{1,\ldots,n-1\}$, ordered by \eqref{E:CD_J} and satisfying
\begin{equation} \label{E:Dr}
  \sfp - \a_{n-1}(Z_w) \,\in\, \{ \sfa , \sfa+1 \}\,, \quad\hbox{and}\quad
  2 \le \ttj_\sfr - \ttj_{\sfr+1} \ \hbox{ when } \ \sfr > \a_{n-1}(Z_w) \, ;
\end{equation}
see  \cite[Corollary 3.17]{MR2960030}.  (These $\ttj_\ell$ have the opposite order of those in \cite{MR2960030}.)   We maintain the convention \eqref{E:CDendpts}.

It is well-known that the Schubert varieties of $X=\cS_n$ are indexed by partitions $\lambda = (\lambda_1,\lambda_2,\ldots,\lambda_n)$ satisfying \eqref{SE:Cpart} and 
\begin{equation} \label{E:Dpart}
  \# \{ i \ | \ \lambda_i > n \} \quad\hbox{is even,}
\end{equation}  
cf. \cite[\S9.3]{MR1782635}.  The corresponding Schubert variety is given by \eqref{E:Y1}, with $F^\sbullet$ an isotropic flag as in Section \ref{A:not}.

We define $\ttJ(\lambda)$ as in \eqref{E:jA}, with the following modification of the block decomposition.  In the block decomposition $\lambda = \hat\m_p\cdots\hat\m_1\hat\m_0$, the integers $n-1,n+1$ are considered `consecutive' and are placed in the same $\hat\m_s$--block; likewise, the integers $n,n+2$ are `consecutive.'  For example, if $n=5$, then $\lambda = (2,3,4,6,10)$ has block decomposition $\hat\m_1\hat\m_0 = (2,3,4,6)(10)$; likewise, $\lambda = (1,2,5,7,8)$ has block decomposition $\hat\m_1\hat\m_0 = (1,2)(5,7,8)$.

As before,
\begin{equation} \label{E:JDn}
  \ttj_\ell(\lambda) \ = \ | \hat\m_\sfp\cdots\hat\m_\ell |\,.
\end{equation}
Define
\begin{equation}\label{E:aDn}
  \sfa \ = \ \left\{ \begin{array}{ll}
  \sfp - 2 & 
  \hbox{ if } \lambda_1 = 1 \hbox{ and } \lambda_n - \lambda_{n-1} > 1 \,,\\
  \sfp - 1 & 
  \hbox{ if } \lambda_1 = 1 \hbox{ and } \lambda_n - \lambda_{n-1} = 1 \,,
  \ \hbox{or } \lambda_1 > 1 \hbox{ and } \lambda_n - \lambda_{n-1} > 1 \,,\\
  \sfp & 
  \hbox{ if } \lambda_1 > 1 \hbox{ and }\lambda_n - \lambda_{n-1} = 1  \,.
  \end{array}\right.
\end{equation}

\begin{lemma}[{\cite{flex}}] \label{L:Dpart}
Given a partition $\lambda$ indexing a Schubert variety \eqref{E:Y1} in $\cS_n = D_n/P_n$, the set $\ttJ(\lambda) = \{ \ttj_\sfp(\lambda) , \ldots , \ttj_1(\lambda)\}$ is given by \eqref{E:JDn}, and $\sfa(\lambda)$ is given by \eqref{E:aDn}.  

Conversely, given $\sfa$ and $\ttJ=\{\ttj_\sfp,\cdots,\ttj_1\}$, we construct $\lambda(\sfa,\ttJ)$ as follows.  Let $\lambda' = \m_\sfp\cdots\m_1\m_0$ be given by
\eqref{E:Cblock}, with $\ell+m=\sfa+1+\a_{n-1}(Z_w)$.  If $\lambda'$ satisfies \eqref{E:Dpart}, then $\lambda = \lambda'$.  If \eqref{E:Dpart} fails for $\lambda'$, then we modify the partition as follows: precisely one of $\{ n,n+1\}$ belongs to $\lambda'$, denote this element by $a'$, and the other by $a$.  Then $\lambda$ is obtained from $\lambda'$ by replacing $a'$ with $a$.  
\end{lemma}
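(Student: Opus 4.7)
The plan is to follow the template established by Lemma \ref{L:Cpart} for the Lagrangian Grassmannian, adjusted for the forked Dynkin diagram of $D_n$ and the parity condition \eqref{E:Dpart}. First I would set up the explicit identification $T_o\cS_n \simeq \fg_{-1} \simeq \tw^2\bC^n$ as a $\fg_0$--module, indexing weight vectors by pairs of indices. Given a partition $\lambda$ satisfying \eqref{SE:Cpart} and \eqref{E:Dpart}, I would translate the isotropic incidence conditions $\tdim(E\cap F^{\lambda_k})\ge k$ defining $Y_\lambda(F^\sbullet)$ into vanishing conditions on matrix entries of $X\in\fg_{-1}$ (writing $E = \texp(X)\cdot o$), thereby identifying $\fn_w$ explicitly as the sum of root spaces $\fg_{-\a}$ for $\a$ in an explicit subset $\Delta(w)\subset\Delta(\fg_1)$.

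Next I would invoke Proposition \ref{P:aJ} to recover $(\sfa(w),\ttJ(w))$ as the unique pair for which $\fn_w = \fg_{-1,0}\op\cdots\op\fg_{-1,-\sfa}$ under the $(Z_\tti,Z_w)$--bigrading. The decisive computation is that each block boundary in the decomposition $\lambda=\hat\m_\sfp\cdots\hat\m_0$ corresponds to a simple root $\a_\ttj$ for which the dimension requirement forces a jump in $Z_w$--eigenvalue across the corresponding weight, so the block lengths \eqref{E:JDn} recover $\ttJ$. The $Z_w$--span of $\fn_w$ then yields $\sfa$ via \eqref{E:aDn}, where the three cases are distinguished by whether the endpoints $\lambda_1,\lambda_n$ of the partition correspond to the extreme indices of $\fg_{-1}$. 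For the converse, I would verify that the blocks \eqref{E:Cblock} (taken with $\ell+m=\sfa+1+\a_{n-1}(Z_w)$) reproduce a $\lambda'$ satisfying \eqref{SE:Cpart} whose $\ttJ(\lambda')$ matches the given $\ttJ$, after which one checks that the modified swap at the fork yields a $\lambda$ satisfying \eqref{E:Dpart}.

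The main obstacle is the interaction between the fork of the $D_n$ diagram and the partition data. Two aspects require careful bookkeeping: first, the identification of $n-1,n+1$ (respectively $n,n+2$) as ``consecutive'' in the block decomposition is forced by the fact that $\a_{n-1}$ and $\a_n$ both attach to $\a_{n-2}$, so that the associated weights of $\fg_{-1}$ lie in the same $Z_w$--eigenspace whenever $n-1\notin\ttJ$ (and in adjacent ones when $n-1\in\ttJ$); this is exactly why \eqref{E:r} and the condition $2\le \ttj_\sfr-\ttj_{\sfr+1}$ of \eqref{E:Dr} take the form they do. Second, the parity constraint \eqref{E:Dpart} reflects that $\cS_n$ is one of two connected components of the orthogonal Grassmannian, so $Y_\lambda$ exists in $\cS_n$ only for partitions of the correct parity; when the naive construction \eqref{E:Cblock} produces $\lambda'$ with the wrong parity, the swap of $\{n,n+1\}$ is the unique modification preserving both the block decomposition (under the modified notion of consecutiveness, $n-1,n+1$ and $n,n+2$ remain paired) and the underlying sequence of $Z_w$--eigenvalues, while flipping $\#\{i\mid \lambda_i>n\}$ by $\pm1$. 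Once this dictionary is in place, the remaining verifications are routine case checks confirming that the forward and reverse maps are mutually inverse.
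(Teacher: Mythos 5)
First, a point of reference: the paper does not actually prove Lemma \ref{L:Dpart} — it is quoted from \cite[Appendix A]{flex}, and Section \ref{S:aJD} only records the statement. So there is no in-paper proof to measure your argument against; what follows is an assessment of your outline on its own terms.

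Your strategy is the natural one and runs exactly parallel to the way the type A and type C dictionaries (Lemmas \ref{L:part} and \ref{L:Cpart}) are obtained: identify $\fg_{-1}\simeq\tw^2\bC^n$, convert the incidence conditions of \eqref{E:Y1} into the vanishing of an explicit set of root spaces, and read off $(\sfa,\ttJ)$ from Proposition \ref{P:aJ}. Your explanation of the two $D_n$--specific wrinkles is also correct: writing the roots of $\fg_1$ as $\epsilon_i+\epsilon_j$, the coefficient of $\a_{n-1}$ is $1$ for $j\le n-1$ and $0$ for $j=n$ while all other simple-root coefficients agree between $\epsilon_i+\epsilon_{n-1}$ and $\epsilon_i+\epsilon_n$, which is precisely why the partition entries $n-1,n+1$ (resp.\ $n,n+2$) must be declared ``consecutive'' and why everything depends on $\a_{n-1}(Z_w)$; and the $\{n,n+1\}$ swap changes $\#\{i\mid\lambda_i>n\}$ by exactly one, which is what the parity normalization \eqref{E:Dpart} requires. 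That said, as written this is a roadmap rather than a proof: everything that makes the lemma true is deferred to ``routine case checks.'' In particular you do not (i) exhibit the explicit bijection between pairs $i<j$ indexing weights of $\tw^2\bC^n$ and the entries of $\lambda$, (ii) verify that the block boundaries of $\hat\m_\sfp\cdots\hat\m_0$ land exactly at the positions $\ttj_\ell$ of \eqref{E:JDn}, or (iii) derive the three-way case split of \eqref{E:aDn}, which records how the boundary behaviors $\lambda_1=1$ and $\lambda_n-\lambda_{n-1}=1$ shift $\sfp$ down to $\sfa$ — this last item is where an error would most plausibly hide, since it is exactly where the $D_n$ answer diverges from the $C_n$ formula of Lemma \ref{L:Cpart}. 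The plan contains no wrong turns, but to count as a proof it needs those computations carried out.
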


As an example, Table \ref{t:D6P6} lists the partitions and corresponding $\sfa:\ttJ$ and $\sfr$ values for the Schubert varieties of $\cS_6 = \tSpin_{12}\bC/P_6$.  
\begin{small}
\begin{table}[!h]
\caption[Spinor variety $\cS_6$]{Schubert varieties of $\cS_6$.}
\label{t:D6P6}
\renewcommand{\arraystretch}{1.2}
\begin{tabular}{|c|c|c||c|c|c|}
\hline
   $\lambda$ & $\sfa:\ttJ$ & $\sfr$ & 
   $\lambda$ & $\sfa:\ttJ$ & $\sfr$ \\ \hline \hline
      $(1,2,3,4,5,6)$ & &  &
      $(1,2,3,4,7,8)$ & $0:4$ & $0$ \\ \hline
      $(1,2,3,5,7,9)$ & $0:3,5$ & $1$ & 
      $(1,2,4,5,7,10)$ & $0:2,5$ & $1$ \\ \hline
      $(1,2,3,6,8,9)$ & $0:3$ & $0$ & 
      $(1,3,4,5,7,11)$ & $0:1,5$ & $1$ \\ \hline
      $(1,2,4,6,8,10)$ & $1:2,3,5$ & $1$ & 
      $(2,3,4,5,7,12)$ & $0:5$ & $1$ \\ \hline
      $(1,3,4,6,8,11)$ & $1:1,3,5$ & $1$ & 
      $(1,2,5,6,9,10)$ & $1:2,4$ & $1$ \\ \hline
      $(2,3,4,6,8,12)$ & $1:3,5$ & $1$ & 
      $(1,3,5,6,9,11)$ & $2:1,2,4,5$ & $2$ \\ \hline
      $(1,2,7,8,9,10)$ & $0:2$ & $0$ & 
      $(2,3,5,6,9,12)$ & $2:2,4,5$ & $2$ \\ \hline
      $(1,4,5,6,10,11)$ & $1:1,4$ & $1$ & 
      $(1,3,7,8,9,11)$ & $1:1,2,5$ & $1$ \\ \hline
      $(2,4,5,6,10,12)$ & $2:1,4,5$ & $2$ & 
      $(2,3,7,8,9,12)$ & $1:2,5$ & $1$ \\ \hline
      $(1,4,7,8,10,11)$ & $2:1,2,4$ & $1$ & 
      $(3,4,5,6,11,12)$ & $1:4$ & $1$ \\ \hline
      $(2,4,7,8,10,12)$ & $3:1,2,4,5$ & $2$ & 
      $(1,5,7,9,10,11)$ & $1:1,3$ & $1$ \\ \hline
      $(3,4,7,8,11,12)$ & $2:2,4$ & $1$ & 
      $(2,5,7,9,10,12)$ & $2:1,3,5$ & $2$ \\ \hline
      $(1,6,8,9,10,11)$ & $0:1$ & $0$ & 
      $(3,5,7,9,11,12)$ & $3:1,3,4$ & $2$ \\ \hline
      $(2,6,8,9,10,12)$ & $1:1,5$ & $1$ & 
      $(4,5,7,10,11,12)$ & $1:3$ & $1$ \\ \hline
      $(3,6,8,9,11,12)$ & $2:1,4$ & $1$ & 
      $(4,6,8,10,11,12)$ & $2:1,3$ & $1$ \\ \hline
      $(5,6,9,10,11,12)$ & $1:2$ & $1$ & 
      $(7,8,9,10,11,12)$ & & \\ 
\hline
\end{tabular}
\end{table}
\end{small}


\section{The exceptional cases} \label{S:aJE}

Figures \ref{f:E6} and \ref{f:E7} (pages \pageref{f:E6} and \pageref{f:E7}) are respectively the Hasse diagrams $W^\fp$ of the Cayley plane $E_6/P_6$ and Freudenthal variety $E_7/P_7$.  Each node represents a Schubert class $\xi_w = [Y_w]$ and is labeled with the corresponding $\sfa(w):\ttJ(w)$ values, which we obtained with the assistance of \cite{LiE}.  The height of the node indicates the dimension of $Y_w$; in particular, the lowest node $o \in X$ is at height zero.  Two nodes are connected if the Schubert variety associated with the lower node is a divisor of the Schubert variety associated with the higher node.

\begin{figure}[htb]
\setlength{\unitlength}{35pt}
\begin{picture}(5,16)
\put(5,0){\line(-1,1){5}}\put(3,4){\line(-1,1){2}}
\put(3,6){\line(-1,1){2}}\put(4,7){\line(-1,1){4}}
\put(5,8){\line(-1,1){4}}\put(3,12){\line(-1,1){1}}
\put(0,11){\line(1,1){5}}\put(1,10){\line(1,1){2}}
\put(1,8){\line(1,1){2}}\put(0,5){\line(1,1){4}}
\put(1,4){\line(1,1){4}}\put(2,3){\line(1,1){1}}
\multiput(5,0)(-1,1){6}{\circle*{0.15}}
\multiput(3,4)(-1,1){3}{\circle*{0.15}}
\multiput(3,6)(-1,1){3}{\circle*{0.15}}
\multiput(4,7)(-1,1){5}{\circle*{0.15}}
\multiput(5,8)(-1,1){5}{\circle*{0.15}}
\multiput(3,12)(-1,1){2}{\circle*{0.15}}
\multiput(5,16)(-1,-1){3}{\circle*{0.15}}
\thinlines
\put(4.2,14.9){1:2} \put(3.2,13.9){2:4} \put(2.2,12.9){3:35}
\put(3.2,11.9){1:3} \put(1.2,11.9){2:15}
\put(0.2,10.9){1:5} \put(2.2,10.9){3:135}
\put(1.2,9.9){2:35} \put(3.2,9.9){2:14}
\put(2.2,8.9){3:145} \put(4.2,8.9){1:12}
\put(1.2,7.9){1:4} \put(3.2,7.9){2:125} 
\put(5.2,7.9){0:1} \put(5.1,8.25){$Q^8$} 
\put(2.2,6.9){2:124} \put(4.2,6.9){1:15}
\put(1.2,5.9){1:23} \put(3.2,5.9){1:14}
\put(0.2,4.9){0:2} \put(-0.1,5.25){$\bP^5$} 
\put(2.2,4.9){1:123}
\put(1.2,3.9){0:12} \put(3.2,3.9){0:3} \put(3.2,4.2){$\bP^4$} 
\put(2.2,2.9){0:23} \put(3.2,1.9){0:4} \put(4.2,0.9){0:5}
\normalsize
\put(5.2,-0.1){$o \in X$} \put(5.2,15.9){$X$}
\end{picture}
\caption[The Hasse diagram for the Cayley plane $E_6/P_6$]{Hasse diagram of $E_6/P_6$, each node labeled with the $\sfa:\ttJ$ values.}
\label{f:E6}
\addcontentsline{toc}{section}{Figure \ref{f:E6}: Hasse poset and $\sfa,\ttJ$ data for the Cayley plane $E_6/P_6$}
\end{figure}

\begin{figure}[htb]
\setlength{\unitlength}{20pt}
\begin{picture}(8,27)
\put(2,6){\line(1,-1){6}} \put(3,7){\line(1,-1){2}}
\put(3,9){\line(1,-1){2}} \put(2,12){\line(1,-1){4}}
\put(3,13){\line(1,-1){4}} \put(4,14){\line(1,-1){4}}
\put(0,17){\line(1,-1){4}} \put(1,18){\line(1,-1){4}}
\put(2,19){\line(1,-1){4}} \put(3,20){\line(1,-1){2}}
\put(3,22){\line(1,-1){2}} \put(0,27){\line(1,-1){6}}
\put(4,4){\line(1,1){1}} \put(3,5){\line(1,1){5}}
\put(2,6){\line(1,1){5}} \put(3,9){\line(1,1){3}}
\put(3,11){\line(1,1){2}} \put(2,12){\line(1,1){2}}
\put(4,13){\line(1,1){2}} \put(3,14){\line(1,1){2}}
\put(4,12){\line(0,1){1}} \put(4,14){\line(0,1){1}}
\put(3,13){\line(0,1){1}} \put(5,13){\line(0,1){1}}
\put(2,15){\line(1,1){3}} \put(1,16){\line(1,1){5}}
\put(0,17){\line(1,1){5}} \put(3,22){\line(1,1){1}}
\multiput(2,6)(1,-1){7}{\circle*{0.25}} \multiput(3,7)(1,-1){3}{\circle*{0.25}}
\multiput(3,9)(1,-1){3}{\circle*{0.25}} \multiput(2,12)(1,-1){5}{\circle*{0.25}}
\multiput(3,13)(1,-1){5}{\circle*{0.25}} \multiput(4,14)(1,-1){5}{\circle*{0.25}}
\multiput(0,17)(1,-1){5}{\circle*{0.25}} \multiput(1,18)(1,-1){5}{\circle*{0.25}}
\multiput(2,19)(1,-1){5}{\circle*{0.25}} \multiput(3,20)(1,-1){3}{\circle*{0.25}}
\multiput(3,22)(1,-1){3}{\circle*{0.25}} \multiput(0,27)(1,-1){7}{\circle*{0.25}}
\thinlines
\footnotesize
\put(8.2,-0.2){$o \in X$} \put(-0.7,26.9){$X$}
\put(5.4,5.2){$\bP^5$} \put(1.3,6.2){$\bP^6$}
\put(8.4,10.2){$Q^{10}$}
\put(7.2,0.85){0:6} \put(6.2,1.85){0:5} \put(5.2,2.85){0:4}
\put(4.2,3.85){0:23} \put(5.4,4.8){0:3} \put(3.3,4.85){0:12}
\put(4.2,5.85){1:123} \put(2.4,5.85){0:2}
\put(5.2,6.85){1:14} \put(3.2,6.85){1:23}
\put(4.2,7.85){2:124} \put(6.2,7.85){1:15}
\put(3.4,8.85){1:4} \put(5.3,8.85){2:125} \put(7.2,8.85){1:16}
\put(4.2,9.85){3:145} \put(6.2,9.85){2:126} \put(8.4,9.75){0:1}
\put(3.2,10.85){2:35} \put(5.2,10.85){3:146} \put(7.2,10.85){1:12}
\put(2.4,11.85){1:5} \put(4.2,11.85){4:1356} \put(6.2,11.85){2:14}
\put(1.6,12.85){3:156} \put(3.6,12.3){2:36} \put(5.2,12.85){3:135}
\put(1.6,13.85){4:356} \put(3.6,14.35){2:15} \put(5.2,13.85){3:136}
\put(2.2,14.85){3:46} \put(4.2,14.85){5:1356} \put(6.4,14.85){1:3}
\put(1.2,15.85){2:26} \put(3.2,15.85){4:146} \put(5.2,15.85){3:35}
\put(0.4,16.85){1:6} \put(2.2,16.85){3:126} \put(4.2,16.85){5:346}
\put(1.2,17.85){2:16} \put(3.3,17.85){4:236} \put(5.2,17.85){2:4}
\put(2.2,18.85){3:36} \put(4.2,18.85){5:246} 
\put(3.2,19.85){4:46} \put(5.2,19.85){3:25}
\put(4.2,20.85){5:256} \put(6.4,20.85){1:2}
\put(3.4,21.85){2:5} \put(5.3,21.85){3:26}
\put(4.2,22.85){4:25} \put(3.2,23.85){3:4} \put(2.2,24.85){2:3}
\put(1.2,25.85){1:1}
\normalsize
\end{picture}
\caption[The Hasse diagram for $E_7/P_7$]{Hasse diagram of $E_7/P_7$, each node labeled with the $\sfa:\ttJ$ values.}
\label{f:E7}
\addcontentsline{toc}{section}{Figure \ref{f:E7}: Hasse poset and $\sfa,\ttJ$ data for the Freudenthal variety $E_7/E_7$}
\end{figure}

\bibliographystyle{plain}
\def\cprime{$'$}

\end{document}